\newtheorem{proposition}{Proposition}[section]
\newtheorem{theorem}[proposition]{Theorem}
\newtheorem{corollary}[proposition]{Corollary}
\newtheorem{lemma}[proposition]{Lemma}
\theoremstyle{definition}
\newtheorem{definition}[proposition]{Definition}
\newtheorem{example}[proposition]{Example}
\newtheorem{remark}[proposition]{Remark}
\newcommand{\set}[1]{\left\{#1\right\}}
\newcommand{\setcon}[2]{\left\{#1\ \left|\ #2\right.\right\}}
\newcommand{\abs}[1]{\left\lvert#1\right\rvert}
\newcommand{\into}{\hookrightarrow}
\newcommand{\Z}{\mathbb{Z}}
\newcommand{\N}{\mathbb{N}}
\newcommand{\eps}{\varepsilon}
\newcommand{\diam}{\textup{diam}}
\newcommand{\Cay}{\textup{Cay}}
\newcommand{\sat}{\textup{sat}}
\newcommand{\fgen}[1]{\left\langle #1 \right\rangle}
\newcommand{\ngen}[1]{\left\langle\!\left\langle #1 \right\rangle\!\right\rangle}
\newcommand{\fpres}[2]{\left\langle #1 \left| #2 \right.\right\rangle}
\title[The geometry of generalized loxodromic elements]{The geometry of generalized loxodromic elements}
\author[Abbott]{Carolyn R. Abbott}
 \email{cra2112@columbia.edu}
\address{Department of Mathematics, Columbia University, 2990 Broadway, New York, NY 10027}
\author[Hume]{David Hume}
 \email{david.hume@maths.ox.ac.uk}
\address{Mathematical Institute, University of Oxford, Woodstock Road, Oxford OX2 6GG}
\date{\today}
\begin{document}

\begin{abstract}
We explore geometric conditions which ensure a given element of a finitely generated group is, or fails to be, generalized loxodromic; as part of this we prove a generalization of Sisto's result that every generalized loxodromic element is Morse. We provide a sufficient geometric condition for an element of a small cancellation group to be generalized loxodromic in terms of the defining relations and provide a number of constructions which prove that this condition is sharp.
\end{abstract}

\maketitle

\section{Introduction}\label{sec:intro}
A recurring theme in group theory is that a useful way to develop an understanding for a given group or class of groups satisfying some weak form of non-positive curvature is to construct interesting actions of them on hyperbolic spaces. Principal examples of this heuristic come from Bass-Serre theory, understanding actions of groups on trees in terms of amalgamations and HNN-extensions; understanding relatively hyperbolic groups via their actions on coned-off graphs, cusped spaces, or combinatorial horoballs; understanding mapping class groups via their actions on curve complexes of subsurfaces; and understanding $\operatorname{Out}(F_n)$ via its actions on the free factor and free splitting complex, among many others. One particular type of interesting 
 action which has received much recent attention is an acylindrical action.

An action of a group $G$ by isometries on a metric space $X$ is \textbf{acylindrical} if for all $d\geq0$ there exist constants $M,N\geq 0$ such that for all $x,y\in X$ with $d(x,y)\geq M$, the number of elements $g\in G$ satisfying $d(x,gx)\leq d$ and $d(y,gy)\leq d$ is at most $N$. An element $g$ of a group $G$ is \textbf{generalized loxodromic} if there is an acylindrical action of $G$ on a hyperbolic space $X$ such that $g$ acts loxodromically. A group is \textbf{acylindrically hyperbolic} if and only if it is not virtually cyclic and contains a generalized loxodromic element \cite{Os16}. 

The class of acylindrically hyperbolic groups is incredibly rich, including all non-elementary hyperbolic and relatively hyperbolic groups, as well as two of the most intensively studied classes of groups in recent years: mapping class groups and the outer automorphism groups of free groups. At the same time the consequences of being acylindrically hyperbolic are strong: these groups are SQ-universal \cite{DGO17}, have non-abelian free normal subgroups \cite{DGO17}, have infinite dimensional second bounded cohomology \cite{BestvinaFujiwara, Hamenstadt}, and have a well-developed small cancellation theory \cite{Hull}.

One of the most important outstanding questions about finitely generated acylindrically hyperbolic groups is whether the class is closed under quasi-isometry. One important difficulty in answering this questions is that there is currently little connection between the existence of generalized loxodromic elements and the geometry of the group. Given a finitely generated (non-virtually cyclic) group $G$, there exist both sufficient and necessary geometric conditions for an element $g$ (of infinite order) to be generalized loxodromic (see \S\ref{sec:intro} for precise definitions):
\begin{itemize}
 \item  If there is a finite symmetric generating set $S$ of $G$ and a constant $D$ such that $\fgen{g}$ is $D$--quasi-convex and $D$--strongly contracting in $\Cay(G,S)$, then $g$ is generalized loxodromic \cite{BBF15}.
 \item  If $g$ is generalized loxodromic, then for any finite symmetric generating set $S$ of $G$, $\fgen{g}$ is a Morse quasi-geodesic in $\Cay(G,S)$ \cite{Sisto_Morse}.
\end{itemize}
Note that by \cite[Theorem $4.19$]{ACGH2} the statement ``there exists a constant $D$ such that $\fgen{g}$ is $D$--strongly contracting in $\Cay(G,S)$'' can even depend on the choice of finite generating set $S$.

In \cite{ACGH1} $\rho$--contraction is introduced as a generalization of  strongly contracting
, and it is proved that a subset of a geodesic metric space is Morse if and only if it is $\rho$--contracting for some function $\rho$.
Moreover, in \cite[Theorem $4.15$]{ACGH2} it is demonstrated that (periodic) geodesics in finitely generated groups exhibit all possible types of $\rho$--contraction. 

\subsection{Saturation}
Our first task in this paper is to prove that contraction is (in general) the wrong measure to use to determine whether an element is generalized loxodromic.

\begin{theorem}\label{thm:contnotgenlox} Given any non-decreasing unbounded function $\rho:[0,\infty)\to[0,\infty)$, there is a group $G$ generated by a finite set $S$ and an element $a\in S$ such that the map $n\mapsto a^n$ is an isometric embedding of $\Z$ into $\Cay(G,S)$ and $\fgen{a}$ is $\rho'$--contracting for some $\rho'\preceq\rho$, but $a$ is not generalized loxodromic.
\end{theorem}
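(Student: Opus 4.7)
My plan is to construct $G$ as a small cancellation quotient of the free group $F = \fgen{a,b}$ by relators that force $a$ to commute with infinitely many independent conjugates of itself. The abundance of commuting conjugates will obstruct $a$ from being generalized loxodromic, while the small cancellation hypothesis will control the geometry of $\Cay(G,S)$ sufficiently to verify the contraction estimate.

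First I would fix a rapidly increasing sequence of positive integers $\ell_k \to \infty$, whose exact growth rate is to be chosen in terms of $\rho$, together with words $w_k \in F$ of length approximately $\ell_k$ and with sufficient $b$-content, so that the cyclically reduced commutators $r_k := [a, w_k a w_k^{-1}]$ form a system satisfying a strong metric small cancellation condition, say $C'(1/100)$. Set $G := F / \ngen{r_k : k \in \N}$ with generating set $S = \{a,b\}$. Greendlinger's lemma then gives that $n \mapsto a^n$ is an isometric embedding of $\Z$ into $\Cay(G,S)$: any representative of $a^n$ shorter than $n$ would yield a reduced van Kampen diagram with a long arc of its boundary reading a power of $a$, which is incompatible with the structure of the $r_k$.

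To see that $a$ is not generalized loxodromic, observe that in $G$ each element $g_k := w_k a w_k^{-1}$ commutes with $a$, and a routine small cancellation argument ensures that the $g_k$ are pairwise non-equivalent modulo $\fgen{a}$. If $a$ were generalized loxodromic, then the centralizer of $a$ in $G$ would be contained in a virtually cyclic elementary subgroup (a standard consequence of acylindrical actions on hyperbolic spaces), contradicting the existence of infinitely many independent conjugates of $a$ that centralize it.

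I expect the main obstacle to be establishing the $\rho'$-contraction of $\fgen{a}$ in $\Cay(G,S)$. The approach is to analyze reduced van Kampen diagrams comparing a geodesic in $\Cay(G,S)$ with a segment of $\fgen{a}$: any geodesic witnessing a shift of $D$ along $\fgen{a}$ via a detour must invoke some relator $r_k$ with $\ell_k \gtrsim D$, and therefore must leave a neighborhood of $\fgen{a}$ of radius on the order of $\ell_k$. Choosing $\ell_k$ to grow at least as fast as $\rho^{-1}(k)$ then bounds the projection diameter of balls of radius $r$ onto $\fgen{a}$ by approximately $\rho(r)$, yielding $\rho'\preceq\rho$. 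The hardest sub-step is handling reduced diagrams in which several distinct relators interact: one must bound each relator's contribution to the shift along $\fgen{a}$ separately and verify that these aggregate in a controlled way, which is where the classification of reduced disc diagrams from small cancellation theory plays the decisive role.
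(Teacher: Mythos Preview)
Your plan has a fatal structural problem at the very first step: the relators $r_k = [a, w_k a w_k^{-1}] = a\,w_k\,a\,w_k^{-1}\,a^{-1}\,w_k\,a^{-1}\,w_k^{-1}$ can never satisfy a small cancellation condition good enough for the rest of the argument. The word $w_k$ occurs twice in $r_k$ (once preceded by $a$ and once preceded by $a^{-1}$), so two distinct cyclic conjugates of $r_k$ share $w_k$ as an initial segment. Hence $w_k$ is a piece, and since $\abs{w_k} = \ell_k$ while $\abs{r_k} = 4\ell_k + 4$, the piece ratio tends to $1/4$. You therefore cannot get $C'(\lambda)$ for any $\lambda \leq 1/4$, and in particular not $C'(1/6)$, so Greendlinger's lemma, the isometric embedding of $\fgen{a}$, and the contraction control via the local-to-global theorem all become unavailable. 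This is not a cosmetic issue that a better choice of $w_k$ fixes: any relator imposing $[a, waw^{-1}] = 1$ must contain $w$ and $w^{-1}$ twice each, so the same obstruction recurs.

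The paper avoids this by working over a four-letter alphabet $\{a,b,c,d\}$ and using relators of a rather different shape. Each relator is (a cyclic reduction of) a product of $f(n)$ commutators of conjugates $d^i c d^{-i}\, g_n\, d^i c^{-1} d^{-i}$ of a single word $g_n \in F(a,b)$; the conjugating words are chosen in $F(c,d)$ and are pairwise distinct, so repeated occurrences of $g_n$ are separated by \emph{different} $c,d$-words, keeping pieces short relative to the relator. Taking $g_n = a^n b^{\rho(n)} a^{-n}$ makes $\fgen{b}$ locally (and hence globally) $\rho$-contracting by the local-to-global theorem. The obstruction to $b$ being generalized loxodromic is also different from your centraliser idea: the $2f(n)$ conjugates above generate a genus-$f(n)$ surface subgroup of $G$, which lies inside $\ngen{b^{\rho(n)}}$. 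Since for a generalized loxodromic element sufficiently high powers have \emph{free} normal closure, this gives the contradiction. If you want to salvage a centraliser-style obstruction you would need relators with no long repeated subwords that nonetheless force commutation, and it is not clear such relators exist in a two-generator free group.
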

\begin{theorem}\label{thm:genloxnotcont} There is a group $G$ generated by a finite set $S$ and an element $a\in S$ such that the map $n\mapsto a^n$ is an isometric embedding of $\Z$ into $\Cay(G,S)$ and $a$ is generalized loxodromic, but $\fgen{a}$ is not $\rho$--contracting for any $\rho$ such that $\liminf_{r\to\infty}\rho(r)\log(r)/r=0$.
\end{theorem}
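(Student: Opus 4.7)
The plan is to construct $G$ as an infinitely presented classical small cancellation group on two generators. I take
\[G = \fpres{a,b}{r_n : n\in\N}\]
with $r_n = a^{k_n}w_n$, where $k_n$ is a sparse increasing sequence and $w_n$ is a word in $\{b,b^{-1}\}$ of length $|w_n|\asymp k_n\log k_n$, chosen (generically, in the spirit of Arzhantseva--Delzant) so that $\{r_n\}$ satisfies the metric condition $C'(\lambda)$ for some $\lambda$ much smaller than $1/6$. The two geometric parameters to calibrate are the ratio $|w_n|/k_n$, which governs the $\log r$ factor in the conclusion, and the $C'(\lambda)$ constant, which governs the qualitative geometry near each relator.

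First, a standard Dehn-algorithm argument gives that $n\mapsto a^n$ is an isometric embedding of $\Z$ into $\Cay(G,\{a,b\})$: any shortening of an $a$-word would require a van Kampen diagram using some $r_n$, but $r_n$ only replaces $a^{k_n}$ by $w_n^{-1}$, which is at least as long. Next, to verify $a$ is generalized loxodromic, I would invoke the Gruber--Sisto theorem that non-elementary infinitely presented $C'(1/6)$ groups are acylindrically hyperbolic, with an explicit hyperbolic model in which any element whose powers are not contained as subwords of any relator acts loxodromically. Since $w_n$ involves only $b^{\pm 1}$ and $k_n\to\infty$, for $m\notin\{k_n\}$ the power $a^m$ is not a subword of any $r_n$, so $a^m$ (hence $a$) is loxodromic.

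Third, for the contraction lower bound, fix $n$ and let $\gamma_n$ denote the path in $\Cay(G,\{a,b\})$ labeled by $w_n$ from $1$ to $a^{k_n}$, and let $x_n$ be its midpoint. By Greendlinger's lemma applied to any van Kampen diagram filling a hypothetical shortcut from $x_n$ to $\langle a\rangle$, such a diagram must involve $r_n$ and the $C'(\lambda)$ condition limits the length of a piece that $r_n$ may share with any other relator or with $\langle a\rangle$; this forces
\[R_n := d(x_n,\langle a\rangle) \asymp |w_n| \asymp k_n\log k_n.\]
The open ball $B(x_n,R_n)$ is disjoint from $\langle a\rangle$ but contains points of $\gamma_n$ arbitrarily close to both $1$ and $a^{k_n}$, whose closest-point projections to $\langle a\rangle$ are $1$ and $a^{k_n}$. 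Hence the contraction function $\rho_0$ of $\langle a\rangle$ satisfies $\rho_0(R_n)\geq k_n \asymp R_n/\log R_n$. Any $\rho$ with $\liminf_{r\to\infty}\rho(r)\log(r)/r = 0$ fails to dominate $\rho_0$ along the sequence $R_n$, so $\langle a\rangle$ is not $\rho$-contracting.

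The main obstacle is the geometric estimate $R_n\asymp |w_n|$: a priori, other relators $r_m$ might provide a shortcut from the middle of $\gamma_n$ back to $\langle a\rangle$. Ruling this out requires choosing the $w_n$ with very few long common subwords (both among themselves and with powers of $a$), and then a careful Greendlinger-type dissection of any putative shortcut diagram to show that every face has a boundary arc on $\gamma_n$ comparable to its perimeter, producing a contradiction with $C'(\lambda)$. A secondary point is that the growth rate $|w_n|\asymp k_n\log k_n$ needs to be matched precisely to the statement $\liminf\rho(r)\log(r)/r = 0$, which requires tracking the logarithmic factor carefully through the small cancellation estimates.
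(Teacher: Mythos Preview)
Your proposal has two real problems, one fatal.

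First, a smaller issue: with only two generators and $w_n$ a reduced word in $\{b,b^{-1}\}$, each $w_n$ is simply a power of $b$. Then $b^{|w_m|}$ is a piece appearing in $r_m$ and in every $r_n$ with $n>m$, and its length is comparable to $|r_m|$; so $\{r_n\}$ cannot satisfy $C'(\lambda)$ for any $\lambda<1$. The paper avoids this by working over four generators and building the ``filler'' words $w_k^i$ out of $b,c,d$ so that they share only short pieces. This is easy to fix in your outline by enlarging the alphabet.

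The fatal gap is your argument that $a$ is generalized loxodromic. You appeal to the Gruber--Sisto coned-off graph $\hat X=\Cay(G,S\cup L)$, but in that graph $a$ is \emph{elliptic}, not loxodromic: for every $m$ one has $m\le k_n$ for some $n$, so $a^m$ is an initial subword of $r_n$, hence $a^m\in L$ and $d_{\hat X}(1,a^m)\le 1$. Your sentence ``for $m\notin\{k_n\}$ the power $a^m$ is not a subword of any $r_n$'' is simply false --- initial segments of $a^{k_n}$ are subwords of $r_n$. This is exactly the tension the theorem is exhibiting: the long powers of $a$ inside the relators are what destroy strong contraction, and they simultaneously prevent $a$ from being loxodromic in any of the standard hyperbolic models for $G$.

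The paper resolves this by \emph{not} working inside $G$ at all. It embeds $G$ into a larger $C'(\frac18)$ group $G'=\fpres{S'}{R'}$ with an infinite generating set $S'=S\sqcup\{x_1,x_2,\dots\}$, obtained by slicing each relator $r_k$ into pieces $R_k^j$ in which $a$ appears only to the first power, the pieces being glued along fresh words in the $x_i$. In $\Cay(G',S')$ the cyclic subgroup $\fgen{a}$ is then $D$--strongly contracting (by the local-to-global contraction theorem for small cancellation groups), and because pieces have length $\to\infty$, the image of $G$ sits metrically properly in $\Cay(G',S')$. Strong contraction plus properness give WPD for $a$ acting on $\Cay(G',S')$ (via \cite{BBF15} and \cite{Os16}), hence $a$ is generalized loxodromic in $G$. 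Your argument contains no analogue of this supergroup construction, and without it there is no candidate hyperbolic space on which $a$ acts loxodromically.
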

The first of these results was also proved in \cite[Theorem $6.4$]{ACGH2} using torsion as an obstruction. The examples we give can be made torsion-free, but rely heavily on the technology developed in \cite{ACGH2} to control the contraction. The $\log(r)$ term in Theorem \ref{thm:genloxnotcont} is intriguing. While it is clear that our method cannot be improved, we can show that for any function $\rho$ such that $\liminf_{r\to\infty}\rho(r)/r=0$ there is a countable group $G$ generated by a set $X$ and an element $a\in X$ such that the map $n\mapsto a^n$ is an isometric embedding of $\Z$ into $\Cay(G,X)$ and $a$ is generalized loxodromic, but $\fgen{a}$ is not $\rho$--contracting.

Let us introduce a new measure which is more closely linked to generalized loxodromic elements. Given two words $v,w\in F(S)$, with $v$ cyclically reduced, we define the \textbf{$w$--saturation of $v$} to be $\sat_w(v)=k/|v|_S$ where $k$ is the largest number of letters in a cyclic conjugate of $v$ which can be covered by (not necessarily disjoint) copies of cyclically reduced conjugates of the word $w$. For example $\sat_a(a^3ba^4)=\sat_{a^4}(a^3ba^4)=\frac78$ but $\sat_{a^8}(a^3ba^4)=0$. Our first result states that for any group, a strong form of $w$-saturation is sufficient to prevent $w$ from being a generalized loxodromic element.

\begin{theorem}\label{thm:saturatednotglox} Let $G$ be a group generated by a symmetric set $S$. If there exist cyclically reduced words $v_i\in F(S)$ such that $v_i=_G 1$ for each $i$, the image of the path $[1,v_i]$ in $\Cay(G,S)$ is quasi-isometric to a cycle of length $\abs{v_i}_S$ (with constants independent of $i$) and 
\begin{equation}\label{eq:sat}\lim_{n\to\infty}\limsup_{i\to\infty} \left(\sat_{w^n}(v_i)\right)> 0,
\end{equation} 
then $w$ is not generalized loxodromic.
\end{theorem}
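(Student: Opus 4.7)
Plan. I argue by contradiction: suppose $w$ is generalized loxodromic, so $G$ admits an acylindrical action on a hyperbolic space with $w$ loxodromic; let $E(w)\supseteq\fgen{w}$ be its virtually-cyclic elementary closure. By Sisto's theorem (the second bullet in the introduction), $\fgen{w}$ is then a Morse quasi-geodesic in $\Cay(G,S)$ with some Morse gauge.

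Using (\ref{eq:sat}), fix $\alpha>0$ such that for arbitrarily large $n$ there are infinitely many $i$ with $\sat_{w^n}(v_i)\geq\alpha$. For each such $(n,i)$, the definition of saturation supplies a cyclic conjugate $v_i'$ of $v_i$ and sub-arcs of $[1,v_i']$ each labelled by a cyclic conjugate of $w^n$, together covering at least $\alpha|v_i|_S$ letters. Since all these sub-arcs share the common length $n|w|$, a greedy Vitali-type selection extracts a pairwise disjoint subcollection $u_1,\ldots,u_\ell$ covering at least $\alpha|v_i|_S/2$ letters, so $\ell n|w|\geq\alpha|v_i|_S/2$ and $\ell\to\infty$ with $i$ for each fixed $n$. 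Write $v_i'=g_0u_1g_1\cdots u_\ell g_\ell$ with $\sum_k|g_k|_S\leq(1-\alpha/2)|v_i|_S$. Each $u_j$ sits (up to initial and terminal segments of length $\leq|w|$) on a translate $A_j$ of $\fgen{w}$; because $[1,v_i]$ is uniformly QI to a cycle, the arc complementary to $u_j$ in the loop is itself a quasi-geodesic with both endpoints on $A_j$, so the Morse property forces the whole loop into a uniform neighborhood of $A_j$. Applying this simultaneously to $u_j$ and $u_k$ forces $A_j,A_k$ to fellow-travel for Cayley-length of order $n|w|$, which for $n$ sufficiently large pins $A_j$ and $A_k$ to a common left coset of $E(w)$.

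All $u_j$ therefore lie coarsely on a single axis $A$; orient each by $\epsilon_j\in\{\pm1\}$ according to its direction along $A$. Coarsely projecting the closed loop to $A$ yields
\[
\Bigl|\sum_{j=1}^\ell\epsilon_j\,n|w|\Bigr|\leq\sum_k|g_k|_S+O(\ell),
\]
and combined with $\ell n|w|\geq\alpha|v_i|_S/2$ this forces a positive fraction of the $u_j$'s to have each sign, producing $\Omega(\ell)$ ``sign-reversing'' pairs $(u_j,u_k)$. Each such pair yields a group element that moves two widely separated points on the axis of $w$ by a bounded amount; acylindricity caps the number of such elements by a constant independent of $i$, contradicting $\ell\to\infty$.

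The main technical obstacle lies in the last step: turning the heuristic supply of sign-reversing pairs into genuinely distinct group elements that coarsely stabilise a long axis segment, and bookkeeping the constants (Morse gauge, acylindricity data, uniform QI constants of $[1,v_i]\to\text{cycle}$) so that they do not swallow $\ell$ as $n,i\to\infty$. As a sanity check, the cheap route via a homogeneous quasi-morphism $\phi$ with $\phi(w)\neq 0$ (Bestvina--Fujiwara applied to the acylindrical action), using $\phi(u_j)=n\phi(w)$ by conjugation-invariance and $\phi(v_i)=0$, only delivers the weak inequality $\sat_{w^n}(v_i)\leq 2M|w|/(|\phi(w)|+M|w|)$, which is strictly positive; so the full strength of the Morse-plus-acylindricity argument genuinely is required.
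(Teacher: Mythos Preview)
Your argument breaks at the point where you claim that ``the arc complementary to $u_j$ in the loop is itself a quasi-geodesic'' and hence, by the Morse property of $A_j$, the entire loop lies in a uniform neighbourhood of $A_j$. This is false: the complementary arc has length $\abs{v_i}-n\abs{w}$, while its endpoints are at Cayley-distance $\preceq n\abs{w}$ (via $u_j$ itself, or equivalently via the uniform QI to a cycle), so its quasi-geodesic constants are of order $\abs{v_i}/(n\abs{w})\to\infty$ as $i\to\infty$ with $n$ fixed. The Morse gauge $N(K,C)$ therefore gives no uniform control. Consequently the conclusion that all $A_j$ coincide as $E(w)$-cosets is not just unproved but generically wrong: already for a $C'(\frac16)$ relator of the shape $w^n g_1 w^n g_2\cdots w^n g_\ell$ with short, distinct $g_k\notin E(w)$, the segments $u_j$ sit on the pairwise distinct cosets $(w^n g_1\cdots w^n g_{j-1})\fgen{w}$. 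Once the ``single axis'' picture collapses, the coarse projection, the sign-balance inequality, and the acylindricity count all collapse with it.

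The paper's proof never tries to place the $u_j$ on a common axis. Instead it uses that $E(w)\hookrightarrow_h(G,X)$ for the acylindrical action and invokes the Dahmani--Guirardel--Osin polygon estimate (their Proposition~4.14): if one replaces each maximal $E(w)$-coset subpath $L_j$ by a single $E(w)$-edge and fills in the gaps by geodesics in $\Cay(G,X\sqcup E(w))$, the resulting $2k$-gon has its isolated components satisfying $\sum_j\widehat d(a_j,b_j)\leq 2Dk$. Saturation forces $\sum_j\widehat d(a_j,b_j)\succeq l_N$, so $k/l_N$ is bounded below; but each component has $\widehat d$-length $\succeq l$, so $k\preceq l_N/l$, and letting $l\to\infty$ gives the contradiction. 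The crucial point is that this argument tolerates the $L_j$ lying in \emph{different} cosets---exactly what your Morse approach cannot.
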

When $(\ref{eq:sat})$ is satisfied we say $\set{v_i}$ is \textbf{$w$--saturated}. To prove Theorem \ref{thm:saturatednotglox} we use an obstruction given by \cite[Proposition 4.14]{DGO17}. From this we deduce the following corollary, first proved by Sisto \cite{Sisto_Morse}.

\begin{corollary}[\cite{Sisto_Morse}]\label{cor:notMorsenotsat} Let $G$ be a group generated by a symmetric subset $S$, and let $\fgen{w}$ be an undistorted infinite cyclic subgroup of $G$ with respect to the word metric on $S$. If $w$ is generalized loxodromic, then $\fgen{w}$ is Morse as a subset of $\Cay(G,S)$.
\end{corollary}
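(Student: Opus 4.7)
I would prove the corollary by contrapositive, using Theorem \ref{thm:saturatednotglox}: suppose $\fgen{w}$ fails the Morse property in $\Cay(G,S)$; I will construct a $w$--saturated family of null-homotopic cyclically reduced loops satisfying the cycle condition of Theorem \ref{thm:saturatednotglox}, contradicting the hypothesis that $w$ is generalized loxodromic.

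Failure of Morse gives constants $K,C$ and a sequence of $(K,C)$--quasi-geodesics $\gamma_i$ in $\Cay(G,S)$ with both endpoints on $\fgen{w}$ that escape arbitrarily large neighborhoods of $\fgen{w}$. After translating, I may assume the endpoints are $1$ and $w^{n_i}$. A $(K,C)$--quasi-geodesic is contained in a bounded neighborhood of its endpoints, of size depending only on $K,C$ and the endpoint distance, so if $\gamma_i$ travels arbitrarily far from $\fgen{w}$ then $d(1,w^{n_i})\to\infty$; the undistortion of $\fgen{w}$ then forces $n_i\to\infty$. Let $u_i\in F(S)$ be the word spelling $\gamma_i$, and let $v_i$ be the cyclic reduction of $w^{n_i}u_i^{-1}$; then $v_i=_G 1$ and the underlying path traces the $w$--power arc from $1$ to $w^{n_i}$ followed by the reverse of $\gamma_i$.

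Saturation is then immediate: a cyclic conjugate of $v_i$ has $w^{n_i}$ as its initial segment, and for any fixed $n\le n_i$ this subword is covered by $\lfloor n_i/n\rfloor$ consecutive copies of the word $w^n$, accounting for at least $(n_i-n)|w|_S$ letters. The quasi-geodesic inequality gives $|u_i|_S\le K\,d(1,w^{n_i})+KC\le K n_i|w|_S+KC$, so $|v_i|_S\le(K+1)n_i|w|_S+KC$, and hence
\[
\sat_{w^n}(v_i)\;\ge\;\frac{(n_i-n)|w|_S}{(K+1)n_i|w|_S+KC}\;\xrightarrow[i\to\infty]{}\;\frac{1}{K+1}.
\]
Thus $\limsup_i \sat_{w^n}(v_i)\ge 1/(K+1)$ for every $n$, so the double limit in $(\ref{eq:sat})$ is bounded below by $1/(K+1)>0$, and $\{v_i\}$ is $w$--saturated.

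The main obstacle is the cycle condition: the loops $v_i$ must be uniformly quasi-isometric to combinatorial cycles of length $|v_i|_S$, i.e., have no substantial shortcuts. Naively, two points of the loop (both on $\gamma_i$, or one on $\gamma_i$ and one on the $w$--arc) might be far along the loop but close in $\Cay(G,S)$. I would handle this by a shortcut-removal argument: given a pair $x,y$ with $d(x,y)\ll d_{\text{loop}}(x,y)$, splice the loop along a geodesic from $x$ to $y$ into two shorter loops; at least one retains a deviation from $\fgen{w}$ of order comparable to the original, along with a proportional $w$--power segment. Iterating terminates because loop length strictly decreases at each step, producing loops uniformly quasi-isometric to cycles while retaining both the $w$--saturation estimate and null-homotopy. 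With both hypotheses of Theorem \ref{thm:saturatednotglox} verified, $w$ is not generalized loxodromic, contradicting the assumption.
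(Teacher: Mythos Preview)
Your overall strategy---contrapositive, build $w$-saturated null-homotopic loops from the failure of the Morse property---matches the paper's, and your saturation estimate is fine. The difficulty is exactly where you locate it, and your handling of the quasi-cycle condition is where the proposal falls short.

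The shortcut-removal sketch is not a proof. The claim that the half containing the point of maximum deviation also ``retains \ldots\ a proportional $w$-power segment'' is asserted without argument; it happens to be true after a single splice (because $d(x,y)$ small, together with both the $w$-arc and $\gamma_i$ being quasi-geodesics with the same endpoints, forces the $\gamma$-length and the $w$-arc length on each side of the cut to be comparable), but you do not say this. More seriously, iteration degrades the structure: after one splice the non-$w$-arc side of the loop is a concatenation of a sub-arc of $\gamma_i$ with a short geodesic; after two splices it has three pieces, and so on. You give no control on the quasi-geodesic constants of this composite side, no bound on the number of iterations needed, and hence no guarantee that the process terminates at a loop which is a uniform quasi-cycle while still having $w$-arc length tending to infinity with $i$.

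The paper does not attempt to verify the quasi-cycle hypothesis at all. It extracts, in a single step, a quadrangle $D_n$ around the point $x$ of maximum deviation: follow $\alpha_n$ from $x$ in each direction to the first points $y,y'$ with $2K\,d_S(\cdot,P_n)\le d_S(\cdot,x)$, drop geodesics to nearest points $z,z'$ on the axis $P_n$, and take the resulting $4$-gon. A direct estimate shows the axis side $[z,z']$ has length a definite proportion of $|D_n|$. The paper then explicitly observes that $D_n$ need not be uniformly quasi-isometrically embedded and, rather than invoking Theorem~\ref{thm:saturatednotglox}, opens up its proof and applies the underlying ingredient---the DGO polygon inequality (Lemma~\ref{lem:DGO4.14})---directly to a $4$-gon in $\Cay(G,X\sqcup E(w))$ with a single isolated $E(w)$-component. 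This bypasses the quasi-cycle hypothesis entirely.
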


\subsection{Small cancellation groups}  The examples constructed in Theorems \ref{thm:contnotgenlox} and \ref{thm:genloxnotcont} are small cancellation groups (which are acylindrically hyperbolic by \cite{GruberSisto}).  Thanks to \cite{ACGH2}, we have a clear understanding of contraction in such groups: 
given a $C'(\frac16)$ small cancellation presentation $G=\fpres{S}{R}$ and an infinite order element $g\in G$, we have that $g$ being $\rho$-contracting is essentially equivalent to the statement that no relation in $R$ with length at most $n$ contains a subword equal to a power of a cyclically reduced conjugate of $g$ of length greater than $\rho(n)$.

For these reasons, small cancellation groups are a natural collection of groups in which to start studying connections between generalized loxodromic elements and geometric notions of negative curvature. Our next goal in this paper is to better understand when an element of a small cancellation group is (and is not) generalized loxodromic.

\begin{theorem}\label{thm:unsaturatedglox} Let $G=\fpres{S}{R}$ be a $C'(\frac16)$ small cancellation presentation, enumerate $R=\set{r_1,r_2,\ldots}$ and let $p_i$ be the length of the longest piece in $r_i$.  Let $w\in F(S)$ have infinite order in $G$. If there exists some $m$ such that 
\begin{equation}\label{eq:sparsesat}
 \limsup_{i\to\infty}p_i \cdot\sat_{w^m}(r_i)<\infty,
\end{equation} 
then $w$ is generalized loxodromic.
\end{theorem}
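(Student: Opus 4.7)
The plan is to construct an acylindrical action of $G$ on a hyperbolic space in which $w$ acts loxodromically, using the Bestvina--Bromberg--Fujiwara (BBF) projection complex machinery applied to the equivariant family $\mathbb{Y}$ of translates of $\fgen{w}$, viewed as quasi-lines in $\Cay(G,S)$. Equivalently, this will exhibit $\fgen{w}$ as a hyperbolically embedded subgroup, and by \cite{DGO17} it will follow that $w$ is generalized loxodromic.

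First, I would check that $\fgen{w}$ is undistorted in $\Cay(G,S)$. By Greendlinger's lemma, any loop witnessing distortion of $n\mapsto w^n$ must traverse more than half of some relator $r_i$, producing a long subword of $r_i$ equal to a power of a cyclically reduced conjugate of $w$. Unbounded distortion would force $\sat_{w^m}(r_i)$ to stay bounded below for arbitrarily long $r_i$; the $C'(\frac16)$ condition prevents $p_i$ from shrinking too quickly, so this would contradict $(\ref{eq:sparsesat})$. Undistortion makes each translate of $\fgen{w}$ a quasi-line in $\Cay(G,S)$.

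Second, and this is the heart of the argument, I would verify the BBF projection axioms for the nearest-point projections $\pi_A$ onto axes $A\in\mathbb{Y}$. The key bound is $\diam(\pi_A(B)\cup\pi_A(C))\leq\xi$ for some uniform $\xi$ whenever $A,B,C\in\mathbb{Y}$ are pairwise distinct. A close approach between two distinct axes is analyzed through a $C'(\frac16)$ Van Kampen diagram: each interaction arc on a relator $r_i$ is a piece of length at most $p_i$ and must coincide with a power of a cyclic conjugate of $w$. The cumulative length of such subwords of $r_i$ is at most $|r_i|\cdot\sat_{w^m}(r_i)$, so the product bound $p_i\cdot\sat_{w^m}(r_i)\leq C$ supplied by $(\ref{eq:sparsesat})$ converts directly into the required uniform projection bound. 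The Behrstock-type inequality and the finite-projection axiom should follow from analogous diagrammatic estimates.

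Once the projection axioms are established, the BBF quasi-tree of metric spaces $\mathcal{C}(\mathbb{Y})$ is a hyperbolic $G$-space in which $w$ translates its own axis by $|w|_S>0$, hence acts loxodromically; acylindricity of the action, equivalently the hyperbolic embedding of $\fgen{w}$, then follows from the standard control on stabilizers of axes available in $C'(\frac16)$ groups. The main obstacle I anticipate is the Van Kampen diagram analysis in the second step: one must rule out that many small interactions, each individually bounded by the per-relator saturation estimate, could chain together along the boundary of $A$ to produce an arbitrarily long projection — converting the per-relator bound $(\ref{eq:sparsesat})$ into a global, uniform bound on $\diam(\pi_A(B)\cup\pi_A(C))$ is where the combinatorial work is concentrated.
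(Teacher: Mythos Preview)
Your approach is genuinely different from the paper's, and the key step you flag as incomplete is indeed where the difficulty lies.

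The paper does \emph{not} attempt to verify the BBF axioms for the family of $\fgen{w}$--translates inside $\Cay(G,S)$. Instead, it embeds $G$ into a larger $C'(\tfrac16)$ group $G'=\fpres{S'}{R'}$ (Theorem~\ref{supergroup}): each long relator $r_i$ is cut into pieces by inserting fresh generators so that the resulting relators $R'$ contain \emph{no} subword equal to $w^l$ for a fixed $l$. In $\Cay(G',S')$ the axis $\fgen{w}$ is then $D$--contracting by the local-to-global criterion (Theorem~\ref{ACGHcontracting}), and since the embedding $G\hookrightarrow G'$ is metrically proper, $w$ acts WPD on $\Cay(G',S')$; BBF and Osin finish the job. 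The hypothesis \eqref{eq:sparsesat} is used only to ensure enough room in each $r_i$ to perform the cuts while keeping $R'$ a $C'(\tfrac16)$ set.

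Your direct route has a specific error and a genuine obstacle. The error: the arc along which an axis meets a relator $r_i$ is \emph{not} a piece---it is a $w$-power subword of $r_i$, of total length at most $|r_i|\cdot\sat_{w^m}(r_i)$, which by \eqref{eq:sparsesat} is $\le C|r_i|/p_i$ and hence \emph{unbounded} in general. So $\fgen{w}$ need not be strongly contracting in $\Cay(G,S)$; this is precisely one of the paper's points (compare Theorem~\ref{thm:genloxnotcont}). The obstacle: to verify $\diam(\pi_A(B))\le\xi$ for distinct axes you must analyze ladder diagrams where \emph{each} 2-cell $r_i$ carries two $w$-power arcs $a_i,b_i$. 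One can show (using that $w^{\min(a_i,b_i)/|w|}$ is then itself a piece, combined with \eqref{eq:sparsesat} and the $C'(\tfrac16)$ bound on the interior pieces) that $\min(|a_i|,|b_i|)$ and even $|r_i|$ are uniformly bounded---but you still need to bound the \emph{number} of cells in the ladder, and you have not indicated how. The paper's supergroup trick sidesteps all of this diagrammatic bookkeeping by arranging for genuine strong contraction in a bigger space.
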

The strategy behind the proof is to show that $G$ is a subgroup of another $C'(\frac16)$ small cancellation group $G'$ such that $\fgen{w}$ is $D$--contracting in a Cayley graph of $G'$. 

We present two results explaining the sharpness of Theorem \ref{thm:unsaturatedglox}. The first is Theorem \ref{thm:saturatednotglox}: if $\limsup_{i\to\infty}\sat_{w^m}(r_i)$ has a positive lower bound which is independent of $m$, it is clear that ($\ref{eq:sparsesat}$) cannot hold, since $(p_i)$ is unbounded (for every $m$, some cyclically reduced conjugate of $w^m$ is a piece). The second says that it is always possible to build examples of groups which prevent the hypotheses of the above theorem from being strengthened.

\begin{theorem}\label{thm:exnotglox} For every unbounded function $\rho:\N\to\N$ there is a $C'(\frac16)$ small cancellation presentation $G=\fpres{a,b,c,d}{r_1,r_2,\ldots}$ such that the map $\Z\to\Cay(G,\set{a,b,c,d})$ given by $n\mapsto b^n$ is an isometric embedding and $p_i\cdot\sat_{b}(r_i)\preceq\rho(i)$ (where $p_i$ is the length of the longest piece in $r_i$), but $b$ is not generalized loxodromic.
\end{theorem}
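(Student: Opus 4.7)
The plan is to invoke Theorem~\ref{thm:saturatednotglox}: construct a presentation whose relators split into a sparse ``$b$-heavy'' subfamily indexed by a thin set $\{i_j\}$ (providing the required $b$-saturated family) and ``$b$-free'' filler relators at all other indices (for which $\sat_b=0$, so the product bound is automatic). Using unboundedness of $\rho$, first fix strictly increasing indices $i_1<i_2<\cdots$ with $\rho(i_j)\geq j$. At each $i_j$ set $r_{i_j}=b^{j}u_{i_j}$, where $u_{i_j}$ is a cyclically reduced word over $\{a,c,d\}$ of length $6j$; at every other index take $r_i$ to be a cyclically reduced word over $\{a,c,d\}$ of some moderate length.

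The main technical step is to choose the $u_{i_j}$ and the filler relators -- by a standard counting argument in the free monoid on three letters, with the initial and final letters of the $u_{i_j}$ arranged to take enough distinct values to control straddling pieces -- so that any common subword of two cyclic rotations of different relators, restricted to the $\{a,c,d\}$-letters, has length at most, say, $\frac{1}{40}$ of each containing relator. This is where the care is required, though the techniques are entirely classical in small cancellation theory.

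Granted this, the remaining verifications are routine. Pieces coming from $b$-blocks between $r_{i_j}$ and $r_{i_{j'}}$ with $j\leq j'$ have length at most $j=|r_{i_j}|/7$; straddling pieces extend this by at most $|r_{i_j}|/40$, still well below the $C'(\frac16)$ threshold; purely $\{a,c,d\}$-pieces are controlled by construction. Hence the presentation is $C'(\frac16)$, and classical small cancellation results give that each $[1,r_i]$ is quasi-isometric to a cycle with constants independent of $i$, and that any subword of a relator of length less than half the relator length is geodesic in $\Cay(G,\{a,b,c,d\})$. Since every relator has $b$-content of length at most $|r|/7$, applying the geodesic fact to arbitrarily long $b$-subwords yields the isometric embedding $n\mapsto b^n$ of $\Z$. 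For the product bound, $p_{i_j}$ is of order $j$ and $\sat_b(r_{i_j})=1/7$, so $p_{i_j}\cdot\sat_b(r_{i_j})\asymp j\leq\rho(i_j)$; for $i\notin\{i_j\}$ the product vanishes. Lastly, for every $n\geq 1$ and every $j\geq n$ we have $\sat_{b^n}(r_{i_j})\geq j/(7j)=1/7$, so $\lim_n\limsup_i\sat_{b^n}(r_i)\geq 1/7>0$; Theorem~\ref{thm:saturatednotglox} then yields that $b$ is not generalized loxodromic.
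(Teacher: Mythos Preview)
Your approach is essentially correct and genuinely different from the paper's. The paper does not invoke Theorem~\ref{thm:saturatednotglox} here at all; instead it uses the commutator construction of Example~4.6 with $g_n=a^nb^{\rho(n)}a^{-n}$, so that for each $k$ the normal closure $\ngen{b^{\rho(k)}}$ contains an embedded closed surface group (Proposition~\ref{prop:surfacesubgroup}), and then appeals to the fact that for a generalized loxodromic element $b$ one has $\ngen{b^N}$ free for all large $N$. Your route via Theorem~\ref{thm:saturatednotglox} is more direct for this particular statement and avoids the surface-subgroup machinery entirely. On the other hand, the paper's construction simultaneously yields the contraction control needed for Theorem~\ref{thm:contnotgenlox} and is reused for Theorem~\ref{thm:tfnonUA}; note that its relators are \emph{not} $b$-saturated (indeed $\sat_b(C_n)\asymp\rho(n)/n\to 0$ there), so Theorem~\ref{thm:saturatednotglox} would not even apply to them.

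One numerical slip to fix: with $|u_{i_j}|=6j$ you have $|r_{i_j}|=7j$, and your straddling bound gives pieces of length up to $j+|r_{i_j}|/40=47j/40$, which exceeds $|r_{i_j}|/6=7j/6$, so the $C'(\tfrac16)$ check fails as written. Taking $|u_{i_j}|=7j$ (so $\sat_b=1/8$), or tightening the $\{a,c,d\}$-overlap bound from $1/40$ to, say, $1/100$, repairs this with no change to the rest of the argument. Your justification of the isometric embedding of $\fgen{b}$ is also slightly terse: the cleanest way to say it is that relators in a $C'(\tfrac16)$ presentation embed isometrically, so the subword $b^j$ of $r_{i_j}$ is geodesic for every $j\geq 1$.
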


Notice that Theorems \ref{thm:saturatednotglox} and \ref{thm:unsaturatedglox} greatly generalize the sufficient and necessary conditions for an element of a group to be a generalized loxodromic given earlier, but they certainly do not give a complete classification of generalized loxodromic elements.
\medskip

\subsection{Universally acylindrical groups}  Given an acylindrically hyperbolic group it is natural to ask not only which of its elements are generalized loxodromic, but also which of these generalized loxodromic elements can simultaneously act loxodromically with respect to single acylindrical action on a hyperbolic space. A particularly strong version of this is to ask whether a given group is \textbf{universally acylindrical}; that is, does it admit an acylindrical action on a hyperbolic space in which {\em every} generalized loxodromic element acts loxodromically.  Such an action is called a {\bf universal acylindrical action}. Non-elementary hyperbolic groups, mapping class groups, and, more generally, hierarchically hyperbolic groups are all universally acylindrical \cite{Bo08,ABD17}. In \cite{Abbott_Dunwoody}, the first author proved that Dunwoody's inaccessible group is not universally acylindrical, the first finitely generated example of such a group.

One can ask under what conditions  a small cancellation group is universally acylindrical.  The only existing construction of a hyperbolic space admitting a natural action of a small cancellation group is the coned-off graph described by Gruber and Sisto in \cite{GruberSisto}, which is defined to be the Cayley graph of the group with respect to the generating set consisting of the original generating set together with all subwords of relators. Coulon and Gruber show in  \cite{Coulon-Gruber} that the action of the group on this graph is acylindrical if and only if there is an upper bound on the proper powers appearing in relators and, moreover, that this action is a universal acylindrical action if and only if the same condition holds.  
%
%
%
%

Our goal is find more general hypotheses which determine when a small cancellation group is universally acylindrical. Our first result in this direction is a natural extension of Theorem \ref{thm:unsaturatedglox}. Given a cyclically reduced word $w\in F(S)$ and $n\in\N$ we define the \textbf{saturation by $n$th powers} of $w$ to be $\sat_n(w)=k/|w|_S$ where $k$ is the maximal number of letters in a cyclic conjugate of $w$ which can be covered by copies of $n$th powers of cyclically reduced elements of $F(S)$.  For example $\sat_1(a^3ba^4)=1$, $\sat_6(a^3ba^4)=\frac78$ and $\sat_8(a^3ba^4)=0$.

\begin{theorem}\label{thm:unifunsatimpliesUA} Let $G=\fpres{S}{R}$ be a $C'(\frac16)$ small cancellation presentation, and enumerate $R=\set{r_1,r_2,\dots}$. Let $p_i$ be the length of the longest piece in $r_i$. If there exists some $m$ such that $\lim_{i\to\infty}p_i\cdot \sat_m(r_i)<\infty$, then $G$ is universally acylindrical.
\end{theorem}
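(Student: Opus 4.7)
The plan proceeds in two steps: first observe that the uniform saturation hypothesis forces every infinite order element of $G$ to be generalized loxodromic, then construct a single acylindrical hyperbolic action in which all of them act loxodromically.

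For any cyclically reduced $w \in F(S)$ of infinite order in $G$, a covering of a cyclic conjugate of $r_i$ by cyclic conjugates of $w^m$ is in particular a covering by $m$th powers of cyclically reduced elements, so $\sat_{w^m}(r_i) \leq \sat_m(r_i)$. The hypothesis then gives
\[
\limsup_{i \to \infty} p_i \cdot \sat_{w^m}(r_i) \ \leq \ \lim_{i \to \infty} p_i \cdot \sat_m(r_i) \ < \ \infty,
\]
with the \emph{same} bound for every $w$. By Theorem \ref{thm:unsaturatedglox} every infinite order element of $G$ is generalized loxodromic, so to establish universal acylindricity it suffices to exhibit a single acylindrical action of $G$ on a hyperbolic space in which every infinite order element acts loxodromically.

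For the second step I would adapt the embedding construction used in Theorem \ref{thm:unsaturatedglox}, but performed uniformly in $w$. That proof embeds $G$ into an auxiliary $C'(\frac16)$ small cancellation group $G'=G'(w)$ in which $\fgen{w}$ is $D$--contracting; here the goal is to carry out an analogous enlargement once, with parameters depending only on $m$ and on the constant furnished by the hypothesis. Concretely, I would enlarge the generating set and add carefully chosen $C'(\frac16)$ relators so that, after Tietze transformations, the resulting presentation of a group $G^\dagger \supseteq G$ satisfies the bounded proper powers condition of \cite{Coulon-Gruber}. Their theorem then endows $G^\dagger$ with a universal acylindrical action on its Gruber-Sisto coned-off Cayley graph $X$. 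Restricting to $G \leq G^\dagger$ yields an acylindrical action of $G$ on the hyperbolic space $X$; since every infinite order element of $G$ is of infinite order in $G^\dagger$ and therefore acts loxodromically on $X$, this is the desired universal acylindrical action.

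The main obstacle is the uniformity of the embedding. Whereas in Theorem \ref{thm:unsaturatedglox} one may tailor the auxiliary relators to a single $w$, here they must simultaneously control all cyclically reduced conjugates of the $r_i$, so that pieces remain short and no relator retains a long proper power after the Tietze moves. The hypothesis $\lim_{i \to \infty} p_i \cdot \sat_m(r_i) < \infty$ is the quantitative input that makes this bookkeeping possible: a relator with large saturation by $m$th powers must have very short pieces, so the auxiliary relators introduced to shorten the long powers do not create uncontrolled pieces. Verifying the $C'(\frac16)$ condition and the bounded proper powers condition for $G^\dagger$ simultaneously is the central technical step.
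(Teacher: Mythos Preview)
Your approach is correct and is essentially the paper's: both pass through the general embedding construction (Theorem~\ref{supergroup}) with $\mathcal W$ taken to be all of $F(S)$, producing a uniformly power-free $C'(\tfrac16)$ supergroup $G'$ whose Coulon--Gruber coned-off graph furnishes the desired universal acylindrical action upon restriction to $G$. Your Step~1 is in fact unnecessary---once every infinite order element of $G$ acts loxodromically in some fixed acylindrical action on a hyperbolic space, it is automatically generalized loxodromic and that action is already universal---but this is harmless.
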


The second result uses techniques from Theorem \ref{thm:contnotgenlox} to construct uncountably many quasi-isometry classes of torsion-free groups which are acylindrically hyperbolic but not universally acylindrical.

\begin{theorem}\label{thm:tfnonUA} There exist $2^{\aleph_0}$ quasi-isometry classes of torsion-free $C'(\frac16)$ small cancellation groups which are not universally acylindrical.
\end{theorem}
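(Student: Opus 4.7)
The plan is to produce, for each $\sigma$ in an uncountable parameter space, a torsion-free $C'(\tfrac16)$ small cancellation presentation $G_\sigma=\fpres{a,b,c,d}{R_\sigma}$ that is acylindrically hyperbolic yet not universally acylindrical, and then to show the resulting groups fall into $2^{\aleph_0}$ distinct quasi-isometry classes. The construction would adapt the one used to prove Theorem \ref{thm:contnotgenlox} (which already produces torsion-free small cancellation presentations in which a prescribed generator fails to be generalized loxodromic); the new ingredients are an uncountable parametrization of the saturation data and an additional layer of relators guaranteeing that some other generalized loxodromic element survives.

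\medskip

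Concretely, for each $\sigma\in\{0,1\}^{\N}$ I would assemble $R_\sigma$ in three layers. First, a short-piece base family, shared across all $\sigma$, to which Theorem \ref{thm:unsaturatedglox} applies so that one chosen generator is generalized loxodromic in $G_\sigma$; this certifies acylindrical hyperbolicity. Second, a $\sigma$-dependent \emph{sparsity family} whose gaps record the bits of $\sigma$ and dictate the coarse geometry in a rigid manner. Third, a family of witness relators $\{v_n\}$, each cyclically reduced, not a proper power (so that $G_\sigma$ is torsion-free in the $C'(\tfrac16)$ setting), padded with fresh letters from $\{b,c,d\}$ to maintain $C'(\tfrac16)$, and arranged so that $\{v_n\}$ is saturated in the sense of $(\ref{eq:sat})$ by a sequence of pairwise distinct conjugacy classes $[g_n]$ of infinite-order elements.

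\medskip

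The crux is to show $G_\sigma$ is not universally acylindrical. Each $g_n$ should itself remain generalized loxodromic in $G_\sigma$: by arranging that the only relators containing long $g_n$-subwords are the $v_n$ dedicated to that index, one can verify the hypothesis of Theorem \ref{thm:unsaturatedglox} for $g_n$ against the remaining relators. The non-UA conclusion then comes from upgrading Theorem \ref{thm:saturatednotglox} to a diagonal statement: in any hypothetical acylindrical action of $G_\sigma$ on a hyperbolic space in which every $g_n$ acts loxodromically, the saturation witnesses $v_n$ would simultaneously produce arbitrarily large families of near-stabilizing elements along the $g_n$-axes, via the same application of \cite[Proposition 4.14]{DGO17} used in Theorem \ref{thm:saturatednotglox}. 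For $n$ large this would violate the acylindricity constants $M(d),N(d)$ of the fixed action, and hence no single acylindrical action realizes every generalized loxodromic element loxodromically.

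\medskip

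Finally, to separate quasi-isometry classes, I would recover $\sigma$ from the coarse geometry of $\Cay(G_\sigma,\{a,b,c,d\})$ via a rigid invariant such as the asymptotic profile of contraction gauges realized by the periodic geodesics $\langle g_n\rangle$, which by \cite[Theorem 4.15]{ACGH2} can be individually prescribed by the sparsity family. A Bowditch--Thomas--Velickovic-type cardinality argument then extracts $2^{\aleph_0}$ pairwise non-quasi-isometric groups from the continuum of choices of $\sigma$. The hardest step is the diagonal non-UA one, since the acylindricity constants of an unknown universal action must be defeated uniformly by the countable witness family $\{v_n\}$; this will dictate the precise quantitative tuning of the $v_n$ and of the $g_n$-saturation in $(\ref{eq:sat})$.
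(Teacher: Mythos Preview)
Your outline diverges substantially from the paper's argument, and the crucial ``diagonal non-UA'' step contains a genuine gap.

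The paper does not attempt any diagonal upgrade of Theorem~\ref{thm:saturatednotglox}. Instead it uses the commutator construction of Example~\ref{ex:construction}: each relator $R_n$ is built so that the subgroup $\langle S_n\rangle$ is a closed surface group of genus $f(n)$ (Proposition~\ref{prop:surfacesubgroup}). The words $g_n$ are chosen via the Thue--Morse sequence so that every nontrivial element of $F(a,b)$ is strongly contracting, hence generalized loxodromic. Non-universal-acylindricity then follows from a purely \emph{algebraic} consequence of a universal acylindrical action: by \cite[Theorem~5.3 and Proposition~6.34(b)]{DGO17} such an action forces a uniform $m$ with $\ngen{h^m}$ free for every generalized loxodromic $h$, and this is contradicted because $\ngen{g_n}$ contains a surface group for all $n\ge m$. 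The quasi-isometry separation uses Bowditch's taut loop spectrum, not contraction data.

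Your proposed route via \cite[Proposition~4.14]{DGO17} does not work as stated. That proposition (Lemma~\ref{lem:DGO4.14} here) is applied in $\Cay(G,X\sqcup H)$ for a \emph{fixed} subgroup $H$, and the constant $D(\mu,c)$ depends on $H$ through the hyperbolicity constant of that Cayley graph. In your scheme $H=E(g_n)$ varies with $n$, so the constants $D_n$ need not be uniform, and you get no contradiction to a fixed pair $(M,N)$ of acylindricity constants. You might hope to bypass this by producing near-stabilisers directly in the putative universal action, but then you must control the translation lengths of the $g_n$ and the images of the ``joints'' simultaneously; this is a new argument, not ``the same application'' of \cite[Proposition~4.14]{DGO17}, and you have not supplied it.

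There is a second gap in the quasi-isometry separation. Contraction gauges are \emph{not} quasi-isometry invariants: as noted in the introduction (and proved in \cite[Theorem~4.19]{ACGH2}), whether $\langle g\rangle$ is $D$--contracting can depend on the finite generating set. Thus the ``asymptotic profile of contraction gauges'' you propose cannot serve as a QI invariant without further work. The paper sidesteps this entirely by encoding $\sigma$ in the lengths of the relators and invoking Bowditch's taut loop spectrum~\cite{BowditchUncQI}.
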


\subsection{Questions}
\begin{enumerate}
\item Let us construct a small cancellation presentation which contains an element which does not satisfy the hypotheses of either of Theorems \ref{thm:saturatednotglox} or \ref{thm:unsaturatedglox}. Define $w_1=b\in F(a,b)$ and inductively define $w_i=w_{i-1}a^iw_{i-1}$.   Let $\set{v_i}\subset F(c,d)$ be a family of $C'(\frac{1}{11})$ small cancellation relations with $\abs{v_i}=11\abs{w_i}$. Set $R=\set{r_i=v_iw_i}$ and $G=\fpres{a,b,c,d}{R}$. A simple calculation shows that this presentation is $C'(\frac16)$ and $\lim_{i\to\infty}\sat_{a^n}(r_i)=\alpha_n>0$ for all $n$, but $\alpha_n\to 0$ as $n\to\infty$. Is $a\in G$ generalized loxodromic?

\item Is there an ``unsaturated criterion'' in the style of $(\ref{eq:sparsesat})$ such that given any finitely generated group $G$ generated by a finite set $S$ and an infinite order element $w\in G$ where every undistorted loop in $\Cay(G,S)$ is $w$-unsaturated, then $w$ is generalized loxodromic?

\item Is it true that for any function $\rho$ such that $\liminf_{r\to\infty}\rho(r)/r=0$ there is a group $G$ generated by a finite set $S$ and an element $a\in S$ such that the map $n\mapsto a^n$ is an isometric embedding of $\Z$ into $\Cay(G,S)$ and $a$ is generalized loxodromic, but $\fgen{a}$ is not $\rho$--contracting?
\end{enumerate}

\subsection*{Plan of the paper} After establishing some background results we prove all the sufficient statements in \S \ref{section:loxodromics}.   In particular, we  give a general construction which embeds one small cancellation group into another (Theorem \ref{supergroup}), from which we deduce Theorems \ref{thm:genloxnotcont}, \ref{thm:unsaturatedglox} and \ref{thm:unifunsatimpliesUA} as corollaries. The obstructions are contained in \S\ref{sec:notloxodromics}: we first prove Theorem \ref{thm:saturatednotglox}, then give a method of building non-examples from which we deduce Theorems \ref{thm:contnotgenlox}, \ref{thm:exnotglox} and \ref{thm:tfnonUA}.

\subsection*{Acknowledgements}
The authors are grateful to R\'{e}mi Coulon and Dominik Gruber for interesting conversations, and for sharing with us the results of their paper. The authors also thank the referee for several comments which improved the clarity of the paper.

\section{Preliminaries}\label{sec:background}
Given a group $G$ which is generated by a symmetric set $X$ we denote the word metric on $G$ with respect to $X$ by $d_X$ and define $\abs{g}_X=d_X(1,g)$ for all $g\in G$.


\subsection{Small cancellation theory}
Let $S$ be a set, and let $R$ be a subset of a free group $F(S)\setminus\set{1}$ consisting of cyclically reduced elements which is closed under taking inverses and cyclic conjugates. Given $\lambda\in(0,1)$, we say $R$ satisfies the {\bf $C'(\lambda)$ small cancellation condition} if, given any distinct pair of elements $r,r'\in R$ which have reduced decompositions $r=us$ and $r'=us'$, we have $\abs{u}<\lambda\abs{r}$. Any word $u$ satisfying the above property is called a {\bf piece}.

It is useful to represent families of small cancellation relations graphically. To each element $r\in R$, we define a cyclic graph $C_r$ with $\abs{r}$ directed edges each of which is labeled by elements of $S$ in such a way that the word in $F(S)$ read clockwise from a fixed vertex $v$ is exactly $r$ (for an edge labeled $s$ directed clockwise we read $s$, if it is directed anticlockwise we read $s^{-1}$). Changing the cyclically reduced representative of $r$ changes the fixed vertex from which the label is read, while taking inverses corresponds to reading the label anticlockwise (or considering a reflection of the cycle). A piece is therefore a labeled, directed subpath which appears in at least two cycles.

We require two fundamental results from small cancellation theory.

\begin{lemma} Let $R\subset F(S)$ satisfy the $C'(\lambda)$ small cancellation condition for some $\lambda\leq\frac16$, and define $G=\fpres{S}{R}$
and $X=\Cay(G,S)$. For each $r\in R$, the natural label-preserving map $C_r\to X$ which sends $v$ to $1$ is an isometric embedding.
\end{lemma}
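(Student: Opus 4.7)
I would argue by contradiction. Since the label-preserving map $\phi:C_r\to X$ is always $1$-Lipschitz, a failure of isometry gives vertices $x,y$ of $C_r$ with $d_X(\phi(x),\phi(y))<d_{C_r}(x,y)$. Let $\alpha$ be the shorter of the two arcs of $C_r$ joining $x$ to $y$, so $|\alpha|=d_{C_r}(x,y)\leq|r|/2$, and let $\gamma$ be an $X$-geodesic from $\phi(x)$ to $\phi(y)$. The closed loop $w=\gamma\cdot\alpha^{-1}$ represents the identity in $G$ and has total length $|\gamma|+|\alpha|<2|\alpha|\leq|r|$.

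If $w$ is already trivial in $F(S)$ then, since both $\gamma$ (a geodesic) and $\alpha$ (a subword of the cyclically reduced word $r$) are freely reduced, we get $\gamma=\alpha$ as reduced words, contradicting $|\gamma|<|\alpha|$. Hence $w$ is a non-trivial element of $F(S)$ killed in $G$, and Greendlinger's Lemma for $C'(\tfrac16)$ presentations supplies a cyclic subword $u$ of $w$ coinciding with a subword of some relation $r'\in R$ of length strictly greater than $\tfrac12|r'|$. I then split into cases on the location of $u$. If $u$ lies inside $\gamma$, replacing it by the strictly shorter complementary arc of $r'$ produces a path from $\phi(x)$ to $\phi(y)$ shorter than $\gamma$, contradicting the geodesy of $\gamma$. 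If $u$ lies inside $\alpha^{-1}$, then $u$ appears both as a subword of $r'$ and (via $\alpha\subset C_r$) of $r$; provided $r'\neq r$ as elements of $R$, this is a piece of length $>\tfrac12|r'|$, directly violating $C'(\tfrac16)$.

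The main obstacle is the degenerate subcase in which $r'$ and $r$ coincide as elements of $R$ (so that $u$ does not immediately constitute a piece), together with the possibility that $u$ straddles the junction between $\gamma$ and $\alpha^{-1}$ in $w$. I plan to address these by taking a reduced van Kampen diagram $D$ of minimal area for $w$ and gluing the single-face disk $C_r$ to $D$ along $\alpha$, producing a diagram $D'$ bounding $\gamma$ concatenated with the complementary arc $\alpha'$ of $C_r$. The offending face of $D$ and the glued $C_r$-face are then adjacent in $D'$ along an arc of length $>|r|/2$, which admits a standard folding reduction removing both faces. The resulting diagram still witnesses an isometric failure for the pair $\{x,y\}$ (since $|\gamma|<|\alpha|\leq|\alpha'|$) but has strictly smaller area, so induction on the minimal witnessing area closes the argument.
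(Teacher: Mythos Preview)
The paper does not prove this lemma at all: it is listed among the ``two fundamental results from small cancellation theory'' and simply quoted as background, with no argument given. So there is no proof in the paper to compare your attempt against; I can only evaluate the attempt on its own merits.

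Your overall strategy (contradict either geodesy of $\gamma$ or the piece bound via Greendlinger) is the standard one, and the clean cases are handled correctly. One small observation: the ``degenerate subcase'' you worry about, where $u$ lies entirely in $\alpha^{-1}$ and $r'=r$, is in fact vacuous. In that situation $|r'|=|r|$ and $|u|>|r'|/2=|r|/2\geq|\alpha|\geq|u|$, a contradiction; so no diagram trick is needed there.

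The genuine gap is in the straddling case. Write $u=u_1u_2$ with $u_1\subset\gamma$ and $u_2\subset\alpha^{-1}$. If the occurrence of $u_2$ in $r'$ coincides with its occurrence in (a cyclic conjugate of) $r^{\pm1}$, then your glue-and-cancel works. But if the two occurrences are \emph{distinct}, then $u_2$ is a piece, so $|u_2|<|r'|/6$, and crucially $\Pi$ and the glued $C_r$--face are adjacent only along $u_2$, which is short; they are \emph{not} a reducing pair. Your claimed ``arc of length $>|r|/2$'' between them is false here, the folding reduction does not fire, and the area does not drop. Your induction stalls in exactly this subcase.

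A clean repair is to reverse the order of operations: glue the $C_r$--disc to $D$ along $\alpha$ \emph{first}, reduce the resulting diagram $D'$, and then invoke the two--shell form of Greendlinger's lemma on $D'$. Since $D'$ has at least two faces, it has at least two shells; at most one of them can be the $C_r$--face, so some shell is a face of the original $D$. But every face of $D$ meets $\partial D'=\gamma\cup\alpha'$ only along $\gamma$ (the arc $\alpha'$ borders only the $C_r$--face), so that shell's outer arc lies entirely in $\gamma$ with length more than half its relator, contradicting geodesy of $\gamma$. This replaces your area induction and handles all subcases uniformly.
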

We call translates of the image of the cycle $C_r$ in $X$ {\bf relators} (with label $r$).

\begin{lemma}[Greendlinger's Lemma] Let $R\subset F(S)$ satisfy the $C'(\lambda)$ small cancellation condition, define $G=\fpres{S}{R}$ 
and let $w\in F(S)\setminus \{1\}$ satisfy $w=_G 1$. There is a cyclically reduced conjugate $w'$ of $w$ and some $r\in R$ with reduced decompositions $r=us$ and $w'=uv'$ such that $\abs{u}_S\geq (1-3\lambda)\abs{r}_S$.
\end{lemma}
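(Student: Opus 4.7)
My plan is to use the classical van Kampen diagram method. Since $w \in F(S) \setminus \{1\}$ represents the identity in $G$, there exists a van Kampen diagram $D$ over $\fpres{S}{R}$ --- a finite, simply connected planar $2$-complex whose $2$-cells are labeled by elements of $R$ and whose boundary cycle spells a cyclic conjugate $w'$ of $w$. By choosing $D$ to have the minimal possible number of $2$-cells we may assume it is \emph{reduced}, meaning no interior edge bounds two $2$-cells whose labels would cancel across it (otherwise one could excise a pair and reduce the cell count). If $D$ consists of a single $2$-cell, then $w'$ is itself a cyclic conjugate of some $r \in R$ and the conclusion is immediate with $u = r$; so assume $D$ has at least two $2$-cells.

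For each $2$-cell $f$, decompose $\partial f$ into alternating maximal \emph{interior arcs} (subpaths contained in the interior of $D$) and maximal \emph{exterior arcs} (subpaths contained in $\partial D$). Each interior arc is shared with a unique neighbouring $2$-cell, so its label is a common subword of two distinct relators, i.e.\ a piece; the $C'(\lambda)$ hypothesis forces its length to be strictly less than $\lambda\abs{r(f)}_S$. Consequently, if some boundary $2$-cell $f$ has exactly one interior arc, then its unique exterior arc has length greater than $(1-\lambda)\abs{r(f)}_S \geq (1-3\lambda)\abs{r(f)}_S$, and rotating $w'$ so that this arc becomes its prefix produces the desired decompositions $r = us$ and $w' = uv'$ with $\abs{u}_S \geq (1-3\lambda)\abs{r}_S$.

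The main obstacle, which is the actual content of the lemma, is to show that a boundary $2$-cell with only one interior arc must exist in every reduced diagram with at least two cells. I would handle this by the standard combinatorial curvature argument on the planar disk $D$: after suppressing interior vertices of degree two so that edges become arcs, one applies Euler's formula $V-E+F=1$ and combines it with the double-counting identities that each interior arc borders exactly two $2$-cells while each exterior arc borders one, together with the small cancellation bound that the boundary of each $2$-cell decomposes into more than $\lambda^{-1} \geq 6$ pieces. Pushed through, this yields a linear inequality whose solution at the threshold $\lambda \leq \tfrac{1}{6}$ is impossible unless some boundary face has a single interior arc. (Equivalently, one could invoke Strebel's classification of reduced $C'(\tfrac{1}{6})$ diagrams and inspect each case.) Once such a face has been located, the paragraph above completes the proof.
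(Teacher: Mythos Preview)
The paper does not actually prove Greendlinger's Lemma; it is stated without proof in the preliminaries as a standard background result from small cancellation theory. So there is no ``paper's own proof'' to compare against, and your outline is essentially the classical argument via reduced van Kampen diagrams and combinatorial curvature.

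That said, there is a genuine error in what you claim the curvature argument yields. You assert that one can always locate a boundary $2$-cell with \emph{exactly one} interior arc, and hence an exterior arc of length greater than $(1-\lambda)\abs{r}_S$. This is false in general. For a concrete counterexample, take three $2$-cells arranged around a single interior vertex of degree three, each sharing one arc with each of the other two: every face then has exactly two interior arcs and one exterior arc, and no face has a single interior arc. Such a configuration is perfectly compatible with $C'(\tfrac16)$. The actual conclusion of the Euler-characteristic argument (for $C'(\lambda)$ with $\lambda\le\tfrac16$, equivalently the combinatorial $C(6)$ condition) is that some boundary $2$-cell has a single consecutive exterior arc and at most \emph{three} interior arcs. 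Since each interior arc is a piece of length strictly less than $\lambda\abs{r}_S$, the exterior arc then has length greater than $(1-3\lambda)\abs{r}_S$. This is exactly why the factor $3$ appears in the statement; your stronger claim would give the bound $(1-\lambda)\abs{r}_S$, which is not true in general. The fix is simply to replace ``exactly one interior arc'' by ``at most three interior arcs and one exterior arc'' throughout; with that correction the rest of your sketch goes through.
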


\subsection{$\rho$--contraction}
A subset $Y$ of a geodesic metric space $X$ is \textbf{quasi-convex} if there exists a constant $M$ such that any geodesic between two points in $Y$ is contained in the $M$--neighborhood of $Y$. Moreover, $Y$ is \textbf{Morse} if, for every $K\geq 1$ and $C\geq 0$, there is a constant $N=N(K,C)$ such that any $(K,C)$--quasi-geodesic between two points in $Y$ is contained in the $N$--neighborhood of $Y$. When these conditions are satisfied we say that $Y$ is $M$--quasi-convex and $N$--Morse respectively.

Given a subset $Y$ of the vertex set of a graph $X$ we define the closest point projection $\pi_Y$ from $X$ to a subset of $Y$ by the rule
\[
 \pi_Y(x)=\setcon{y\in Y}{d(x,y)\leq d(x,Y)}.
\]
Given a non-decreasing function $\rho:[0,\infty)\to[0,\infty)$ such that $\rho(r)/r\to 0$ as $r\to \infty$, we say $Y$ is $\rho$\textbf{--contracting} if for any two points $x,z\in X$ with $d(x,z)\leq d(x,Y)$,
\[
	\diam\left(\pi_Y(x)\cup \pi_Y(z)\right) \leq \rho(d(x,Y)).
\]
We say $Y$ is 
$D$--\textbf{contracting} for a constant $D$ if $Y$ is $\rho$--contracting with $\rho(r) = D$ for all $r$. By \cite[Theorem 1.3]{ACGH1}, a quasi-convex subset $Y\subseteq VX$ is Morse if and only if it is $\rho$--contracting for some $\rho$. We compare contraction functions using the partial order $\preceq$ defined as follows: $\rho \preceq \rho'$ if there exists a constant $C$ such that
\[
 \rho(r)\preceq C\rho'(r)+C \quad \textup{for all }r.
\]
We write $\rho\asymp\rho'$ when $\rho\preceq\rho'$ and $\rho'\preceq\rho$.

We recall the key local-to-global theorem \cite[Theorem $4.1$]{ACGH2} which allows us to control contraction in small cancellation groups:

\begin{theorem}\label{ACGHcontracting} Let $R\subset F(S)$ satisfy the $C'(\frac{1}{6})$ small cancellation condition, set $G=\fpres{S}{R}$ 
and let $\alpha$ be a geodesic in $X=\Cay(G,S)$. There exists a sublinear function $\rho'$ such that $\alpha$ is $\rho'$--contracting in
$X$ if and only if there exists a sublinear function $\rho$ such that given any relator $C_r$ which intersects $\alpha$, we have $\abs{C_r\cap\alpha}\leq \rho(\abs{C_r})$.

Moreover, when either of the above conditions hold we may take $\rho'\asymp\rho$.
\end{theorem}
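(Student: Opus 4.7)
The plan is to prove both implications of the biconditional and show that the contraction function and the intersection-bound function are equivalent under $\asymp$. I will use the fact (implicit in small cancellation theory via Greendlinger's Lemma) that each relator $C_r$ is isometrically embedded, so $C_r \cap \alpha$ is a single subsegment of $\alpha$ of well-defined length.

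For the easy direction (contracting $\Rightarrow$ bounded intersection), fix a relator $C_r$ that intersects $\alpha$ and write $L = \abs{C_r \cap \alpha}$. Let $a, b$ be the endpoints of this intersection, so $d_\alpha(a,b) = L$. Choose $x$ to be a point on $C_r$ diametrically opposite the midpoint of $C_r \cap \alpha$ along the cycle, so that $d_{C_r}(x,a) = d_{C_r}(x,b) \leq (|C_r| - L)/2$. By the isometric embedding of $C_r$ in $X$, together with the fact that any geodesic from $x$ to $\alpha$ that leaves $C_r$ can only re-enter via a piece (of length $< \lambda|C_r|$), one checks that $d(x,\alpha) = (|C_r| - L)/2 + O(|C_r|)$ with small multiplicative loss and that both $a$ and $b$ lie in $\pi_\alpha(x)$ up to an additive error proportional to $\lambda|C_r|$. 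Taking $z = x$ in the contraction inequality (or a nearby point) yields $L \leq \diam(\pi_\alpha(x)) + O(1) \leq \rho'(d(x,\alpha)) + O(1) \leq \rho'(|C_r|) + O(|C_r|)$, which gives the desired bound $|C_r \cap \alpha| \leq \rho(|C_r|)$ with $\rho \asymp \rho'$.

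For the hard direction (bounded intersection $\Rightarrow$ contracting), take $x, z$ with $d(x,z) \leq d(x,\alpha)$ and pick projections $p \in \pi_\alpha(x)$, $q \in \pi_\alpha(z)$. Form a geodesic quadrilateral $Q$ with sides $[x,p]$, $[p,q] \subset \alpha$, $[q,z]$, and $[z,x]$; the non-$\alpha$ sides have total length $\leq 3 d(x,\alpha) + O(1)$, since $d(z,\alpha) \leq d(z,x) + d(x,\alpha) \leq 2 d(x,\alpha)$ and $|[x,p]| \leq d(x,\alpha)$. Since $Q$ bounds a null-homotopic loop in $\Cay(G,S)$, Greendlinger's Lemma produces a relator $C_{r_1}$ on the boundary with an arc of length at least $(1-3\lambda)|C_{r_1}|$ shared with $\partial Q$. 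Split this shared arc into the portion on $[p,q] \subset \alpha$ (which has length at most $\rho(|C_{r_1}|)$ by hypothesis) and the portion on the three other sides. Because $1-3\lambda > 1/2$ (as $\lambda \leq 1/6$), at least half of $|C_{r_1}|$ must sit on those other sides whenever $|C_{r_1}|$ is large compared to $\rho(|C_{r_1}|)$. I then push the loop across $C_{r_1}$, remove the shared arc, and iterate. Let $(|C_{r_i}|)_i$ be the sequence of relators produced. The total length shaved from the three non-$\alpha$ sides is $\leq 3d(x,\alpha) + O(1)$, and each relator contributes at least $\tfrac12 |C_{r_i}|$ to this total, so $\sum |C_{r_i}| \leq 6 d(x,\alpha) + O(1)$. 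The length removed from $[p,q]$ is at most $\sum \rho(|C_{r_i}|)$, and once no more relators appear the residual quadrilateral has all sides of comparable length, forcing $|[p,q]|$ to be absorbed entirely. Subadditivity/concavity-type manipulations for the sublinear $\rho$, together with the bound $\sum |C_{r_i}| = O(d(x,\alpha))$, give $d(p,q) \leq \rho'(d(x,\alpha))$ with $\rho' \asymp \rho$.

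The main obstacle is the iterative step in the hard direction: one must verify that each Greendlinger reduction strictly simplifies the loop without accidentally re-inflating the $\alpha$-side, and that the accumulated intersection bounds $\sum \rho(|C_{r_i}|)$ can be consolidated into a single application of $\rho$ evaluated at $d(x,\alpha)$. This is where concavity (or the standard trick of replacing $\rho$ by a concave majorant within its $\asymp$-class) is essential; without it, a naive sum could be linear even when each term is sublinear. Managing this step, and keeping track of the additive and multiplicative constants carefully enough to preserve the $\asymp$ relationship in the moreover clause, is the technical heart of the argument.
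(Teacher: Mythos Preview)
This theorem is not proved in the paper; it is quoted as a background result from \cite[Theorem~4.1]{ACGH2}, so there is no proof here to compare your proposal against.

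Regarding the sketch itself: the easy direction is right in spirit, but note that the contraction hypothesis bounds $\diam(\pi_\alpha(x)\cup\pi_\alpha(z))$ only for pairs with $d(x,z)\le d(x,\alpha)$, not $\diam(\pi_\alpha(x))$ on its own, so you should exhibit two nearby points on $C_r$ whose projections land near $a$ and near $b$ respectively, rather than asserting that both $a$ and $b$ lie in $\pi_\alpha(x)$. The hard direction has a genuine gap. After one Greendlinger reduction the residual loop is no longer a geodesic quadrilateral: the three non-$\alpha$ sides have been replaced by paths that are not geodesics, and a subsequent Greendlinger face can share its long outer arc with an arc you inserted at a previous step rather than with the original sides. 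Your inequality ``each $C_{r_i}$ contributes at least $\tfrac12|C_{r_i}|$ to the non-$\alpha$ sides'' is therefore unjustified beyond the first step, and the sum $\sum|C_{r_i}|$ is not controlled by $d(x,\alpha)$ in the way you claim. The argument in \cite{ACGH2} instead passes through the structure theory of reduced disc diagrams for geodesic polygons in $C'(\frac16)$ groups (Strebel's classification of $(3,7)$-bigon shapes and the resulting ladder description), which gives direct combinatorial control over which faces meet $\alpha$ and by how much, and avoids the iterative bookkeeping you correctly identify as the main obstacle.
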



\section{Detecting generalized loxodromic elements}\label{section:loxodromics}
We begin by describing a hyperbolic graph on which $C'(\frac16)$--small cancellations groups act.  This space was introduced and studied in \cite{GruberSisto} in a more general context than we will present here.

Let $R\subset F(S)$ satisfy the $C'(\frac16)$ condition and let $X$ be the Cayley graph of $G=\fpres{S}{R}$ with respect to the generating set $S$. Let $L$ be the set of all initial subwords of words in $R$, that is, the set of all $u\in F(S)$ such that there is some $r\in R$ with reduced decomposition $r=uv$. The \textbf{coned-off graph} $\hat{X}$ is $\Cay(G,S\cup L)$, that is, the Cayley graph of $G$ with respect to the (typically infinite) generating set $S\cup L$.

By \cite[Proposition $3.2$]{GruberSisto}, $\hat X$ is hyperbolic.  Moreover, by \cite[Proposition $4.8$]{GruberSisto}, $G$ contains an element $g$ which acts loxodromically on $\hat X$ and satisfies the \emph{weak proper discontinuity (WPD) property} (see \cite{BestvinaFujiwara}). In \cite[Theorem 1.4]{Os16} Osin showed that the existence of an element $g$ satisfying the WPD property in an action of $G$ on a hyperbolic space is equivalent to $g$ being a generalized loxodromic element and $G$ being an acylindrically hyperbolic group.

However, in general the action of $G$ on $\hat X$ is not acylindrical \cite[Proposition $4.20$]{GruberSisto}. We recall the one situation in which acylindricity of this action is known. We say $R\subset F(S)$ is \textbf{power-free} if, for every $w\in F(S)\setminus\set{1}$, there exists an $n$ such that no $r\in R$ has reduced decomposition $r=w^nv$. If $n$ can be chosen independently of $w$, then we say $R$ is \textbf{uniformly power-free}. Coulon and Gruber prove that the action of every uniformly power-free (graphical) small cancellation group on its coned-off graph $\hat X$ is acylindrical \cite{Coulon-Gruber}.  They also show that every infinite order element of $G$ is loxodromic with respect to this action, and thus these groups are universally acylindrical. Our goal in this section is to give an alternative construction which enables us to detect  generalized loxodromic elements in small cancellation groups which are not necessarily uniformly power-free and to find more groups which are universally acylindrical.

\begin{definition}\label{def:sparsesat} Let $G=\fpres{S}{R}$ be a $C'(\frac16)$ small cancellation presentation. For $r\in R$, let $p_r$ be the length of the longest piece in $r$.  Let $\mathcal{W}\subseteq F(S)$, and let $\mathcal W^m=\{w^m\mid w\in \mathcal W\}$.  We define $l_{\mathcal W}(r)$ to be the maximum number of edges of $C_r$ which can be covered by (not necessarily disjoint) subpaths labeled by powers of cyclically reduced conjugates of elements of $\mathcal W$. The $\mathcal W$\textbf{--saturation} of $C_r$ is defined as
\[
\sat_{\mathcal W}(C_r)=\frac{l_{\mathcal W}(r)}{\abs{C_r}}\in[0,1].
\]
We say $R$ is $\mathcal W$\textbf{--sparse} if there exists $m\geq 1$ such that
\[
 \sup_{r\in R} \left(p_r\cdot \sat_{\mathcal W^m}(C_r)\right)<\infty,
\]
 and $R$ is $\mathcal W$\textbf{--saturated} if 
\[
 \lim_{m\to\infty}\limsup_{\abs{r}_S\to\infty} \left(\sat_{\mathcal W^m}(C_r)\right)>0.
\]
\end{definition}

Given a $C'(\frac16)$ small cancellation group $G=\fpres{S}{R}$ and a subset $\mathcal W \subseteq F(S)$ such that $R$ is $\mathcal W$--sparse, we first show how to construct a $C'(\frac16)$ small cancellation group $G'=\fpres{S'}{R'}$
 for which there exists an $l\geq 0$ such that no $r'\in R'$ has reduced decomposition $r'=w^lv'$ for any $w\in\mathcal W$, together with an embedding of $G$ as a subgroup of $G'$.
 

\begin{theorem} \label{supergroup}
Let $R\subset F(S)$ satisfy the $C'(\frac16)$ condition, and let $\mathcal W\subseteq F(S)$. If $R$ is $\mathcal W$--sparse, then there exists a set $S'$ containing $S$ and a $C'(\frac16)$ subset $R'\subseteq F(S')$ such that the following hold: the inclusion $S\into S'$ extends to a monomorphism $G=\fpres{S}{R} \to G'=\fpres{S'}{R'}$
, and there exists a constant $l\geq 0$ such that no $r'\in R'$ has reduced decomposition $r'=w^lv'$ for any $w\in\mathcal W\setminus\set{1}$.

Moreover, if $\abs{S}<\infty$, then 
every element of $\mathcal{W}$ which has infinite order in $G$ 
is a generalized loxodromic element of $G$.
\end{theorem}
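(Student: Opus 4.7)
The plan is to proceed in two stages: first, construct the supergroup $G'=\fpres{S'}{R'}$ with the claimed structural properties, and second, use the uniform no-long-$\mathcal{W}$-power structure of $R'$ to show every infinite-order $w\in\mathcal{W}$ is a generalized loxodromic element of $G$. Let $m$ be the integer witnessing that $R$ is $\mathcal{W}$-sparse and put $K=\sup_{r\in R}p_r\cdot\sat_{\mathcal{W}^m}(C_r)<\infty$. I will fix a threshold $l\geq m$, chosen large enough depending on $K$ and $m$, and build $R'$ so that no $r'\in R'$ contains $w^l$ as a cyclic subword for any $w\in\mathcal{W}\setminus\set{1}$.

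\textbf{Building $G'$.} For each $r\in R$ I locate, on the cycle $C_r$, the maximal subarcs labeled by a power $w^n$ of a cyclic conjugate of some element of $\mathcal{W}$ with $n\geq l$. The sparseness bound controls the total length of such arcs in $r$ by $K\abs{C_r}/p_r$. At each arc I introduce a block of pairwise disjoint \emph{fresh} new generators $T_r$ (disjoint across arcs and across relators) and rewrite the arc as an alternation of $w^{l-1}$-bursts with letters from $T_r$, supplemented by auxiliary relators identifying the rewritten arc with the original $w^n$ in $G'$. The auxiliary relators are themselves arranged to consist mostly of fresh generators interspersed with short $w$-bursts, so they also contain no $w^l$. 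Setting $S'=S\sqcup T$ where $T=\bigsqcup_r T_r$, and taking $R'$ to be the rewritten main relators together with the auxiliary relators (and their cyclic conjugates and inverses), gives a presentation in which no $r'$ admits a reduced decomposition $r'=w^l v'$.

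\textbf{Small cancellation and embedding.} Because fresh generators are unique to their arc, any piece between two distinct elements of $R'$ lies either in a subword inherited from the original $R$ (hence of length at most $p_r$) or entirely inside a $w^{l-1}$-burst, of length $(l-1)\abs{w}$. Crucially, the original $C'(\frac16)$ condition applied to cyclic conjugates of $r$ already forces $\abs{r}>6(l-1)\abs{w}$ whenever $r$ contains $w^l$, so $\abs{r'}\geq\abs{r}>6(l-1)\abs{w}$ and the new relators satisfy $C'(\frac16)$ against burst-type pieces as well. For $G\into G'$, suppose $u\in F(S)\setminus\set{1}$ satisfies $u=_{G'}1$. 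Greendlinger's lemma inside $G'$ yields a cyclic conjugate of $u$ sharing a subword $\alpha\in F(S)$ of length $>\abs{r'}/2$ with some $r'\in R'$. Since $\alpha$ contains no letters from $T$, it must lie inside a single $S$-portion of $r'$ -- either an unmodified stretch of $r$ or a $w^{l-1}$-burst -- and is therefore a subword of the original $r\in R$. As $\abs{\alpha}>\abs{r'}/2\geq\abs{r}/2$, Greendlinger in $G$ applied with $r$ strictly shortens $u$, and iterating delivers $u=_G 1$.

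\textbf{Generalized loxodromicity.} Assume $\abs{S}<\infty$ and let $w\in\mathcal{W}$ have infinite order in $G$, hence in $G'$. The Gruber--Sisto coned-off graph $\hat{X}'$ of $G'$ is hyperbolic, and the uniform no-$w^l$-subword property of $R'$ prevents $w^n$ from being absorbed by any single relator cycle for $n$ large; consequently $n\mapsto w^n$ is a quasi-isometric embedding $\Z\to\hat{X}'$ and $w$ acts loxodromically. The same uniform bound yields, via a pointwise version of the Coulon--Gruber argument, the WPD property for $w$, so $w$ is a generalized loxodromic element of $G'$ by Osin's characterization. Restricting the corresponding acylindrical $G'$-action on a hyperbolic space to the subgroup $G$ (acylindricity passes to subgroups) produces an acylindrical action of $G$ with $w$ loxodromic. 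The hardest step will be the construction of $R'$: arranging the insertions and the auxiliary relators so that $C'(\frac16)$ and the uniform no-long-$w$-power condition hold simultaneously, while keeping fresh generators dense enough in each modified relator, is precisely where the sparseness bound $p_r\cdot\sat_{\mathcal{W}^m}(C_r)\leq K$ must be spent.
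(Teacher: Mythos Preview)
Your overall strategy—embed $G$ into a supergroup $G'$ whose relators contain no long $\mathcal{W}$-power subwords, then deduce generalized loxodromicity from that structure—matches the paper's. The execution, however, diverges at every stage, and several of your steps have genuine gaps.

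\textbf{Construction.} The paper does not insert fresh letters into relators and then add auxiliary relators to recover the original relation. Instead it \emph{cuts} each cycle $C_r$ into smaller cycles: colour red every edge of $C_r$ lying on a $\mathcal{W}^m$-labelled arc, then repeatedly locate a subpath beginning with $m$ red edges and containing exactly $6p_r$ original (red or blue) edges, and close it off with a fresh arc of length $p_r$ to form a cycle $D_r^j$; process the remaining cycle next. The sparseness bound $p_r\cdot\sat_{\mathcal W^m}(C_r)\le K$ is precisely what, via pigeonhole, guarantees such a subpath exists at each step. Then $R'$ is the set of labels of the $D_r^j$, and the product of the $D_r^j$'s along the fresh arcs reconstructs $C_r$, so $r=_{G'}1$ automatically—no auxiliary relators at all. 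This is what makes both $C'(\tfrac16)$ and injectivity immediate: every piece is either an original piece or lies in a fresh arc, hence has length $\le p_r$, while $|D_r^j|\ge 7p_r$; and any $F(S)$-word trivial in $G'$ is visibly a product of the $D_r^j$'s, hence of the $C_r$'s, hence trivial in $G$.

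Your construction, by contrast, requires auxiliary relators encoding ``$w^n$ equals (rewritten arc)'', and you never specify their form. Without that nothing can be checked: the pieces you have not accounted for are precisely the fresh-generator subwords shared between a main relator and its auxiliary relators, and the auxiliary relators themselves must be long enough relative to those pieces for $C'(\tfrac16)$ to hold. Your piece dichotomy (``inherited from $R$'' or ``inside a $w^{l-1}$-burst'') is therefore incomplete. Your Greendlinger-based injectivity argument likewise assumes that any long $F(S)$-subword of some $r'\in R'$ is a subword of the corresponding original $r$; this is plausible for main relators but says nothing about the unspecified auxiliary ones.

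\textbf{Generalized loxodromicity.} The paper does not use the coned-off graph here. Since $|S|<\infty$ forces $p_r\to\infty$, the inclusion $G\hookrightarrow\Cay(G',S')$ is metrically proper; combined with the fact that $\fgen{w}$ is strongly contracting in $\Cay(G',S')$ (via \cite{ACGH2}, because no $w^l$ appears in any $r'\in R'$), one obtains WPD by \cite{BBF15} and hence generalized loxodromicity by \cite{Os16}. Your appeal to a ``pointwise version of the Coulon--Gruber argument'' is not a citation: their acylindricity theorem requires \emph{uniform} power-freeness, not merely $\mathcal{W}$-power-freeness, and extracting WPD for a single element from their proof is real work you have not done.
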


\begin{proof}
Define a countably infinite set $Y=\set{y_1,y_2,\dots}$ such that $Y\cap S=\emptyset$.  We will see that the set $Y\cup S$ is our desired generating set $S'$.  Enumerate $R=\{r_1,r_2,\dots\}$, and let $C_i=C_{r_i}$.  
Our goal is to define a new set of disjoint cycles, whose labels give the desired set of relations $R'$.

Since $R$ is $\mathcal W$--sparse, there is a constant $m\geq 1$ such that 
\[
 \limsup_{i\to\infty} p_i\cdot\sat_{\mathcal W^m}(C_i)=D<\infty. 
\]
Notice that increasing $m$ does not increase $D$, so we may assume that $m>12D$. Fix $N$ such that for all $n\geq N$ we have $p_n\cdot\sat_{\mathcal W^m}(C_n)<2D$.

For each $n\geq N$ we construct a finite set of cycles $D_n^j$ as follows.  First, color every edge red in $C_n$ which is contained in a path whose label is a cyclically reduced conjugate of an element of $\mathcal W^m$.  There are at most $l_{\mathcal W^m}(r_n)$ such edges. Color the other edges blue.  In the procedure that follows, we will be adding new edges at each step which will be colored white; in the first step, there are no white edges.
 
For each $n\geq N$, define $C_n^0=C_n$. For each $j=1,2,\ldots$ in turn, 
if $C_n^{j-1}$ has no subpath of length at least $m$ consisting solely of red edges, then set $D_n^j=C_n^{j-1}$ and move on to $n+1$. Otherwise, notice that the maximal number of disjoint subpaths of length $m$ consisting solely of red edges is at most $l_{\mathcal W^m}(r_n)/m$.  Additionally, by our choice of constants we have $$\frac{m|C_n|}{l_{\mathcal W^m}(r_n)}=\frac{m}{\sat_{\mathcal W^m}(C_n)}>\frac{p_nm}{2D}>6p_n.$$  The pigeon-hole principle implies that there is a path $P_n^j$ in $C_n^{j-1}$  consisting of $m$ red edges followed by a mixture of blue, red and white edges satisfying the following properties: 
	\begin{itemize}
	\item there is no red subpath of length greater than $m$; and
	\item there are exactly $6p_n$ edges that are blue or red. 
		\end{itemize}
Recall that for $j=1$ there are no white edges.

\begin{figure}
\begin{center}
\resizebox{2in}{!}{
  \centering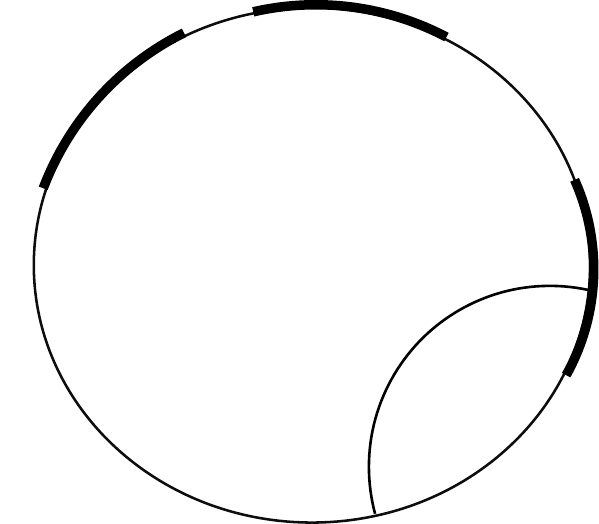} \\
  \end{center}
	\caption{The darkened portions of the cycle $C_n$ are composed of red edges, the interior arcs are white, and all other edges are blue.} \label{newcycles}
\end{figure}

	 Add a path $\alpha_n^j$ of length $p_n$ connecting the last vertex of $P_n^j$ to the first vertex of $P_n^j$, and label its edges using the first $p_n$ unused elements of $Y$.  Color these new edges white. The graph obtained by adding the path $\alpha_n^j$ has two simple cycles containing $\alpha_n^j$.   Denote the simple cycle containing $P_n^j$ by $D_n^j$, and denote the other by $C_n^j$. There are only finitely many red edges in $C_n$ so this process will terminate for each $n$.  (See Figure \ref{newcycles}.)

Let $S'=S\cup Y$, and let $R'$ be the union of $\{r_i \mid i<N\}$ and the set of labels of the cycles $D_n^j$.  It is clear that $S\into S'$.  We claim that this inclusion induces a monomorphism $\phi\colon G=\fpres{S}{R}\to G'=\fpres{S'}{R'}$.  To see that this inclusion induces a homomorphism $\phi\colon G\to G'$, we show that all relations in $G$ hold in $G'$.  If $i<N$, then $r_i\in R'$.  If $i\geq N$, then $r_i$ is the label of the cycle $C_i$, and by construction, $r_i$ is the label of the boundary of a product of finitely many cycles $D_i^j$, whose labels are elements of $R'$.   In either case, the relation $r_i$ holds in $G'$.  To further prove that $\phi$ is a monomorphism, suppose $v\in F(S)$ is trivial in $G'$.  Then $v$ is the label of a product of cycles $C_i$ and $D_i^j$.  However, by construction, the only way the label of such a product can be an element of $F(S)$ is if $v$ can also be written as the label of a product of cycles $C_n$. Therefore, $v=_G1$.

We now show that $R'$ is a $C'(\frac16)$ subset of $F(S')$ and that there exists some $l\geq 0$ such that no $r\in R'$ has a reduced decomposition $r'=w^lv'$ for any $w\in\mathcal W$.

By construction, a piece in $R'$ is either a piece in $R$ or the label of a subpath of some $\alpha_n^j$. If it is a piece in $R$ then it is the label of a subpath of some $C_n$.  If $n<N$ there is nothing to prove. Otherwise, the piece has length at most $p_n$, while the length of any $D_n^j$ is at least $7p_n$ by construction.  Thus $R'$ is a $C'(\frac16)$ subset of $F(S')$.

Fix $l>m$ so that no subpath of any $C_n$ with $n\leq N$ has label contained in $\mathcal W^l\setminus\set{1}$, and suppose there exists some $r'\in R'$ which has a reduced decomposition $r'=w^lv'$ such that  $w\in\mathcal W$.  Then $r'$ must be the label of a subpath of some $D_n^j$ which contains at least $l$ consecutive red edges.  However, by construction, any subpath of any $D_n^j$ consisting solely of red edges has length at most $m$, so there are no such subpaths.

Finally, suppose $\abs{S}<\infty$.  Since there are only finitely many words of a given length,  $p_n\to\infty$ as $n\to\infty$.  Thus there is a last cycle which contains a path of any fixed length which has not appeared as a subpath of any previous cycle. For each $k$ let $l_k$ be the length of the longest cycle $C_n$ with $p_n\leq k$.  It follows that for all $g\in G$ such that $d_{S'}(1,g)\leq k$, we have $d_{S}(1,g)\leq l_kk$, so $G\cap B^{S'}(1; k)$ has finitely many elements.  Therefore, the image of $G$ in $G'$ is metrically proper, so every element of infinite order in $G$ acts weakly properly discontinuously on $\Cay(G',S')$. Moreover, every infinite order element of $\mathcal W$ has strongly contracting orbits in $\Cay(G',S')$ by \cite[Corollary $4.14$]{ACGH2}.  Therefore, by applying \cite[Theorem H]{BBF15} and \cite[Theorem 1.4]{Os16}, we conclude that all such elements are generalized loxodromic elements of $G$.
\end{proof}

Theorems $\ref{thm:unsaturatedglox}$ and $\ref{thm:unifunsatimpliesUA}$ are special cases of the above theorem, where  $\mathcal W$ is a single infinite order element or the whole of $G=\fpres{S}{R}$, respectively. In the second case it is easy to see that $R'$ is uniformly power-free.  Thus \cite[Theorem 5.10]{Coulon-Gruber} implies $G'$ (and hence $G$) acts acylindrically on its coned-off graph $\widehat{X'}$  and every infinite order element of $G'$ acts loxodromically.  Hence the action $G\curvearrowright \widehat{X'}$ is universally acylindrical.

\begin{remark} To conclude that elements of $\mathcal W$ are generalized loxodromic in the case where $\abs{S}=\infty$, one can begin with the argument above and attempt to prove directly that each infinite order element of $\mathcal W$ acts as a WPD isometry of $G'$. The necessary ingredients would likely include an understanding of quadrangles with two sides labeled by powers of a word. Since these are not geodesics in general (though they are close, see \cite[Theorem $4.2$]{Gruber_C6SQ}) there is an extra complication to be dealt with before the classification of quadrangles in small cancellation groups can be used. Since most interest in these groups is focused on finitely generated examples, we will not pursue this further here.
\end{remark}

We are now ready to prove Theorem \ref{thm:genloxnotcont}. 
\begin{proof}[Proof of Theorem \ref{thm:genloxnotcont}]
Set $S=\set{a,b,c,d}$, and for each $k\geq 7$ fix $2^{k+4}$ distinct words $w_k^i(b,c)$ in $F(b,c)$ of length exactly $k$. Now define
\[
 r_k = a^{2^k} \Pi_{i=1}^{2^{k+4}} dw_k^i.
\]
Set $G=\fpres{S}{r_7,r_8,\ldots}$. It is easy to check that this presentation satisfies $C'(\frac16)$, since $\abs{r_k}= (16k+17)2^k$ and the longest possible piece in this relator is $a^{2^k}dw_k^1$ 
which has length $2^k+k+1\leq 2\cdot2^{k}$. Since relators in $C'(\frac16)$ groups are isometrically embedded in the Cayley graph, the map $\Z\to\Cay(G,S)$ given by $n\mapsto a^n$ is an isometric embedding. Using \cite[Theorem $4.1$]{ACGH2}, we see that $\fgen{a}$ is not $\rho$--contracting in $\Cay(G,S)$ unless $\rho(r)\succeq r/\log(r)$. It remains to prove that $a$ is a generalized loxodromic element.

Next let $X=\set{x_1,x_2,\ldots}$ be a countably infinite set, define $S'=S\sqcup X$ and for each $k\geq 5$ and $0\leq j\leq 2^k-1$ define the following elements of $F(S')$:
\[
 R^j_k = ax^j_k \left[\Pi_{i=16j+1}^{16(j+1)} dw_k^i(b,c)\right] (x^{j+1}_k)^{-1}
\]
where each $x^j_k$ is the product of the first $k$ elements of $X$ which have not already appeared in some $x^i_l$ with either $i<j$ and $l\leq k$ or $l<k$, except for $x^0_k$ and $x^{2^k}_k$, which are equal to the identity.

We first claim that $G'=\fpres{X}{\mathcal R}$ is $C'(\frac18)$ where $\mathcal R$ is the set of cyclically reduced conjugates of the $R^j_k$ and their inverses. Since the $x^j_k$ are words in disjoint alphabets, any piece which is an initial subword of some $R^j_k$ must either be some $(x^j_k)^{\pm 1}$ or a subword of a word of the form $(w_k^i d w_k^{i+1})^{\pm 1}$ or $(w_k^{16(j+1)} a w_k^{16j+1})^{\pm 1}$. Hence any piece $u$ contained in a relation $R^j_k$ satisfies $\abs{u}\leq 2k+1$ and $\abs{R^j_k}\geq 16(k+1)$. Since $2k+1 <\frac18\cdot 16(k+1)$ the claim is verified.

The proof of Theorem \ref{supergroup} can be repeated verbatim to deduce that the inclusion $S\into S'$ extends to a monomorphism $G\to G'$, that $\fgen{a}$ is strongly contracting in $X'=\Cay(G',S')$ and that the action of $a$ on $X'$ is weakly properly discontinuous. It follows that $a$ is a generalized loxodromic element of $G$ by again applying \cite[Theorem H]{BBF15} and \cite[Theorem 1.4]{Os16}.
\end{proof}

It is clear that the $\log(r)$ term in statement of Theorem \ref{thm:genloxnotcont} is necessary for our method of proof, since any word in a finitely generated free group of length $n$ must contain two identical subwords of length $\asymp \log(n)$ by the pigeon-hole principle.  However, if we do not require that our group be finitely generated, then for any function $\rho$ such that $\liminf_{r\to\infty}\rho(r)/r=0$, we can find a countable group $G$ generated by a set $Y$ and an element $a$ which is a generalized loxodromic element of $G$ but is not $\rho$--contracting in $\Cay(G,X)$.  We end this section by describing this construction.
\begin{example}
Fix a function $\rho(r)$ such that $1\leq \rho(r)\leq r$ for all $r$ and $\liminf_{r\to\infty}\rho(r)/r=0$. Define an unbounded function $\tau:\N\to\N$ such that $\tau(r)^2 \leq r/\rho(r)$. Now let $Y=\set{y_1,y_2,\ldots}$ be a countably infinite set. Fix a sequence $(n_k)_{k\in\N}\subset\N$ such that for each $n_k$, $l_k:=\lfloor n_k/\tau(n_k)\rfloor\geq 6$ and $\tau(n_k)\geq k$, and define the following elements of $F(\set{a}\sqcup Y)$:
\[
 r_{k} = a^{l_k} w_k,
\]
where $w_k$ is the product of the first $2n_k$ elements of $Y$ which have not previously appeared in any $w_l$ with $l<k$. Now
\[
 \abs{r_k} = l_k + 2n_k > 2kl_k,
\]
so $G=\fpres{\set{a}\sqcup Y}{r_3,r_4,\ldots}$ is $C'(\frac16)$ since the only pieces are powers of $a$. Again, the map $n\to a^n$ defines an isometric embedding of $\Z$ into $\Cay(G,\set{a}\sqcup Y)$. Suppose for a contradiction that $\fgen{a}$ is $\rho$--contracting, so by Theorem \ref{ACGHcontracting} it is locally $\rho'$--contracting for some $\rho'\asymp \rho$. This means that for any (cyclically reduced conjugate of) a relation $r_k$ with reduced decomposition $a^{l_k}w_k$ we must have $\rho'(\abs{w_k}/2)\geq l_k$\footnote{Specifically, we have $\rho'(\lceil \abs{w_k}/4\rceil)\geq l_k$. Since the function $\rho'$ is non-decreasing, the condition $\rho'(\abs{w_k}/2)\geq l_k$ is weaker.}. But the relations $r_k$ above satisfy
\[
 l_k\leq \rho'(\abs{w_k}/2) = \rho'(n_k) \asymp \rho(n_k) \preceq \frac{n_k}{\tau(n_k)^2} \preceq \frac{n_k}{k\tau(n_k)} \preceq \frac{l_k}{k},
\]
which is a contradiction for sufficiently large $k$.

Arguing as before we can construct a supergroup $G'$ of $G$ such that $\fgen{a}$ is strongly contracting in a Cayley graph $X'$ of $G'$ and $G$ acts metrically properly on $X'$ (the $x^j_k$ should be chosen to have length $\tau(n_k)$). Hence, $a$ is a generalized loxodromic element of $G$.
\end{example}

\section{Detecting non-generalized loxodromic elements}

In this section, we give a sufficient condition for an element of a group to fail to be generalized loxodromic, proving Theorem \ref{thm:saturatednotglox} and Corollary \ref{cor:notMorsenotsat}.  Additionally, we construct a particularly nice family of examples which are used to prove Theorems \ref{thm:contnotgenlox}, \ref{thm:exnotglox}, and \ref{thm:tfnonUA}.

\subsection{Hyperbolically embedded subgroups}

We briefly review the definition of a hyperbolically embedded subgroup and refer the reader to \cite{Os16} and \cite{DGO17} for a more complete discussion.

Let $H$ be a subgroup of $G$ and $X\subseteq G$ a relative generating set, i.e., a subset such that $G=\langle X\cup H\rangle$.  We consider the Cayley graphs $\Cay(G,X\sqcup H)$ and $\Cay(H,H)$, and we naturally think of the latter as a subgraph of the former.  We define a metric $\widehat d\colon H\times H\to [0,\infty]$ as follows.  Given $h_1,h_2\in H$, let $\widehat d(h_1,h_2)$ be the length of the shortest path from $h_1$ to $h_2$ in $\Cay(G,X\sqcup H)$ that does not include any edges from $\Cay(H,H)$.  If no such path exists, then let $\widehat d(h_1,h_2)=\infty$.  

We say $H$ is {\bf hyperbolically embedded in $G$ with respect to $X$}, and write $H\hookrightarrow_h (G,X)$, if the following hold: 

\begin{enumerate}
\item $\Cay(G,X\sqcup H)$ is hyperbolic; and 

\item $(H,\widehat{d})$ is a locally finite metric space, i.e., any ball of finite radius in $H$ with respect to the metric $\widehat d$ contains finitely many elements. 
\end{enumerate}

We first gather some facts about hyperbolically embedded subgroups.  Given a generalized loxodromic element $w\in G$, let $E(w)$ be the maximal virtually cyclic subgroup containing $w$.  The following lemma is \cite[Cor. 2.9]{Hull}.

\begin{lemma} \label{Ew}
Suppose $X$ is a subset of $G$ such that $\Cay(G,X)$ is hyperbolic, $G\curvearrowright \Cay(G,X)$ is acylindrical, and $w\in G$ is loxodromic with respect to this action.  Then $E(w)\hookrightarrow_h (G,X)$.
\end{lemma}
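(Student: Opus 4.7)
The plan is to verify the two defining conditions of the hyperbolic embedding applied to $H = E(w)$ with relative generating set $X$: namely, that $\Cay(G, X \sqcup H)$ is Gromov hyperbolic, and that the relative metric $\hat d$ on $H$ is locally finite. Both will be reduced to properties of the quasi-axis of $w$ in $\Cay(G,X)$ together with the acylindricity hypothesis.

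First I would recall the structural consequences of $w$ being loxodromic under an acylindrical action. By standard arguments from \cite{Os16, DGO17}, $E(w)$ is virtually infinite cyclic; the orbit map $E(w) \to \Cay(G,X)$ is a quasi-isometric embedding whose image lies within bounded Hausdorff distance of a $w$-invariant quasi-axis $\gamma$; and $E(w)$ acts properly and cocompactly on $\gamma$. Acylindricity also yields that distinct $G$-translates $g_1\gamma$ and $g_2\gamma$ have uniformly bounded coarse intersection, since a long overlap would force infinitely many elements of $G$ to almost-stabilize a long segment of $\gamma$. For hyperbolicity of $\Cay(G, X \sqcup H)$, I would note that adjoining $H$ to the generating set is quasi-isometric to coning off the collection $\{g\gamma : g \in G\}$ in $\Cay(G,X)$: each $H$-edge shortcuts a geodesic running along some translate $g\gamma$, and conversely each such shortcut can be realized by a single $H$-edge (up to the cocompactness constant). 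Hyperbolicity of $\Cay(G,X)$ together with uniform quasi-convexity of the $g\gamma$ and their bounded coarse intersection then gives hyperbolicity of the coned-off graph by a standard Bowditch/Farb-style argument.

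For local finiteness of $\hat d$, fix $d \geq 0$ and let $h \in E(w)$ satisfy $\hat d(1,h) \leq d$. A realizing path expresses $h$ as an alternating product $h = a_0 f_1 a_1 \cdots f_k a_k$ with $a_i \in \langle X\rangle$, $f_j \in H\setminus\{1\}$, and $\sum |a_i|_X + k \leq d$, subject to the constraint that no partial product $a_0 f_1 a_1 \cdots a_{j-1}$ lies in $H$ (otherwise the $f_j$-edge would be forbidden). Fix $o \in \gamma$ and pick $p = w^N o$ with $N$ large enough that $d_X(o,p)$ exceeds the acylindricity constant $M = M(K)$ for a threshold $K$ to be chosen. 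The goal is to show $d_X(o,ho)$ and $d_X(p,hp)$ are both at most some $K = K(d)$ depending only on $d$ and the axis constants, whence acylindricity bounds the number of admissible $h$ by $N$. The main obstacle is that a single $f_j$-jump may correspond to an enormous element of $H$ (e.g.\ $w^{10^{100}}$), whose displacement on $o$ is unbounded; the resolution is that this jump occurs at a vertex $g_{j-1} \notin H$, so it traverses the translated axis $g_{j-1}\gamma$ rather than $\gamma$, and the closest-point projection $\pi_\gamma(g_{j-1}\gamma)$ has diameter uniformly bounded by the coarse-intersection constant. Consequently the net effect of each jump on $\pi_\gamma$ is $O(1)$, and summing over the at most $d$ steps shows $\pi_\gamma(ho)$ is within distance $O(d)$ of $\pi_\gamma(o)$; the same holds at $p$, so by properness of the $H$-action on $\gamma$ only finitely many $h$ can realize this, finishing the argument.
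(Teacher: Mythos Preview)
The paper does not give a proof of this lemma at all: it is simply quoted as \cite[Cor.~2.9]{Hull}. So there is no argument to compare against; you have supplied a direct proof where the authors defer to the literature.

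Your sketch is essentially the standard route to this result and is correct in outline. Two small comments. First, hyperbolicity of $\Cay(G,X\sqcup H)$ follows already from the uniform quasi-convexity of the cosets $gH$ in the hyperbolic space $\Cay(G,X)$; the bounded coarse intersection of distinct translates $g\gamma$ is not needed for hyperbolicity, only for the second condition. Second, the final appeal to acylindricity (bounding the number of $h$ that move both $o$ and $p$ a bounded amount) is redundant, and your claim that ``the same holds at $p$'' is not actually established by the projection argument you give. Fortunately you do not need it: once your projection argument yields $d_X(o,ho)=O(d)$, the fact that $E(w)$ is virtually cyclic and the orbit map $E(w)\to\Cay(G,X)$ is a quasi-isometric embedding already forces $\{h\in E(w): \hat d(1,h)\le d\}$ to be finite. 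The core of your argument---that an $H$-edge at a vertex $g\notin H$ moves the $\pi_\gamma$-projection only a uniformly bounded amount, because $gH$ lies near $g\gamma$ and $\pi_\gamma(g\gamma)$ has uniformly bounded diameter by acylindricity---is exactly the right mechanism.
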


The next lemma follows from \cite[Lemma 4.11(b)]{DGO17} and \cite[Theorem 4.24(a)]{DGO17}.

\begin{lemma}  \label{finiteset}
If $H$ is a subgroup of $G$ and $X$ a subset of $G$ such that $H\hookrightarrow_h (G,X)$, then there is a finite subset $Y\subset H$ such that $\widehat d$ is biLipschitz equivalent to a word metric $d_Y$.
\end{lemma}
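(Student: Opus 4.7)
The plan is to exhibit $Y$ as a ball in $(H,\widehat d)$ of a carefully chosen radius and then verify each of the two biLipschitz inequalities directly. Fix a constant $L\geq 1$ (to be determined) and set
\[
 Y=\setcon{h\in H}{\widehat d(1,h)\leq L}.
\]
Local finiteness of $(H,\widehat d)$ guarantees that $Y$ is finite, and $Y$ is symmetric because $\widehat d$ is. The upper bound $\widehat d(1,h)\leq L\cdot d_Y(1,h)$ is immediate: if $h=y_1\cdots y_k$ with each $y_i\in Y$, then $G$-invariance of $\widehat d$ together with the triangle inequality gives $\widehat d(1,h)\leq \sum_i\widehat d(1,y_i)\leq kL$.

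For the reverse bound $d_Y(1,h)\leq \widehat d(1,h)$ I would take any $h\in H$ with $\widehat d(1,h)<\infty$, fix an $\widehat d$-geodesic $\gamma$ from $1$ to $h$ (that is, a path in $\Cay(G,X\sqcup H)$ of length $\widehat d(1,h)$ using no $H$-edges) and list the elements of $H$ that $\gamma$ visits as $1=h_0,h_1,\ldots,h_n=h$. Each sub-arc $\gamma_i$ from $h_{i-1}$ to $h_i$ is itself an $\widehat d$-geodesic between its endpoints (if a shorter $\widehat d$-path existed one could splice it in and shorten $\gamma$) and has no interior $H$-vertex, so $\widehat d(h_{i-1},h_i)=\abs{\gamma_i}$. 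Provided $\widehat d(h_{i-1},h_i)\leq L$ for every $i$, the elements $h_{i-1}^{-1}h_i$ all lie in $Y$ by $G$-invariance of $\widehat d$, and the telescoping product $h=\prod_{i=1}^n (h_{i-1}^{-1}h_i)$ yields $d_Y(1,h)\leq n\leq \widehat d(1,h)$. In particular, $Y$ generates $\setcon{h\in H}{\widehat d(1,h)<\infty}$, and on this set the two metrics are biLipschitz.

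The crux, and the main obstacle, is producing a single $L$ that uniformly bounds $\widehat d(h_{i-1},h_i)$ for consecutive $H$-visits along any $\widehat d$-geodesic. Local finiteness of $\widehat d$ alone is not enough (a distorted word metric on $\Z$ is locally finite but fails to be biLipschitz to any finitely generated word metric), so the argument has to genuinely exploit the hyperbolic-embedding hypothesis. The plan is to argue that a long sub-arc $\gamma_i$ in $\Cay(G,X\sqcup H)$, compared with the single $H$-edge from $h_{i-1}$ to $h_i$, produces a long isolated $H$-component in a geodesic of $\Cay(G,X\sqcup H)$, and that hyperbolicity of this Cayley graph combined with local finiteness of $\widehat d$ forces the $\widehat d$-length of any such isolated component to be uniformly bounded. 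This isolated-component estimate is the technical heart of the cited results \cite[Lemma 4.11(b), Theorem 4.24(a)]{DGO17}; once in place it supplies the required $L$ and closes both directions of the biLipschitz equivalence.
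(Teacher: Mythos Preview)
The paper does not actually prove this lemma; it records it as a direct consequence of \cite[Lemma~4.11(b) and Theorem~4.24(a)]{DGO17}. So there is no ``paper's proof'' to compare against beyond the citation itself, and what remains is to assess whether your sketch stands on its own.

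Your reduction is sound up to the point you flag as the crux, but the proposed way of extracting the bound $L$ does not work. The bigon you form has sides $e_i$ (the single $H$-edge between $h_{i-1}$ and $h_i$) and $\gamma_i$. For the estimate of Lemma~\ref{lem:DGO4.14} (Proposition~4.14 of \cite{DGO17}) to apply with $n=2$, the side $\gamma_i$ must itself be a $(\mu,c)$-quasi-geodesic in $\Cay(G,X\sqcup H)$ for fixed constants; the quasi-geodesic hypothesis is imposed on \emph{all} sides of the $n$-gon, not only on the distinguished ones. But $\gamma_i$ joins two points at $d_{X\sqcup H}$-distance~$1$ by a path of length $\widehat d(h_{i-1},h_i)$, so it fails to be a uniform quasi-geodesic precisely when $\widehat d(h_{i-1},h_i)$ is large---which is exactly the situation you are trying to rule out. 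If the argument did go through, it would prove too much: nothing prevents $\gamma$ from meeting $H$ only at its endpoints (so $n=1$ and $\gamma_1=\gamma$), and then your bigon bound would give $\widehat d(1,h)\leq 2D$ for every $h\in H$, forcing $H$ to be finite by local finiteness of $\widehat d$. This already shows the strategy of bounding ``hops along a $\widehat d$-geodesic'' cannot succeed as stated.

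Two smaller points. First, a $\widehat d$-geodesic is a shortest path avoiding the edges of $\Cay(H,H)$ (the copy of $H$ based at the identity coset), not one avoiding all $H$-labelled edges; your parenthetical description is slightly off. Second, the argument in \cite{DGO17} does not try to decompose an individual $\widehat d$-geodesic. It passes instead through the equivalence in Theorem~4.24 (hyperbolic embedding is the same as weak hyperbolic embedding together with a linear relative isoperimetric inequality / bounded relative presentation), and it is the finiteness data coming from that bounded presentation, combined with Lemma~4.11, that produces the finite set $Y$ and the bi-Lipschitz comparison.
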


We require one additional lemma, which is a simplified version of \cite[Proposition $4.14$]{DGO17}. If $H\hookrightarrow_h(G,X)$, then a subpath $p$ of a path $q$ in $\Cay(G,X\sqcup H)$ is an \emph{$H$--component} if the label of $p$ is a word in the alphabet $H$ and $p$ is not contained in a longer subpath of $q$ whose label is a word in the alphabet $H$.  Two $H$--components $p_1,p_2$ of a path $q$ are \emph{connected} if there is an edge $c$ labeled by an element of $H$ whose initial vertex is  on $p_1$ and whose terminal vertex is on $p_2$.  Algebraically, this corresponds to every vertex of $p_1$ and $p_2$ belonging to the same coset of $H$.  An $H$--component $p$ of a path $q$ is \emph{isolated} if it is not connected to any other $H$--component of $q$.

For each $\mu\geq 1$, $c\geq 0$, and $n\geq 2$, let $\mathcal Q_{\mu,c}(n)$ denote the set of all pairs $(\mathcal P,I)$ where $\mathcal P=p_1\dots p_n$ is an $n$--gon in $\Cay(G,X\sqcup H)$ and $I$ is the distinguished subset of sides $\{p_1,p_2,\dots, p_n\}$ such that each $p_i\in I$ is a $(\mu,c)$--quasi-geodesic and an isolated $H$--component in $\mathcal P$.  Note that we allow sides of $\mathcal P$ to be trivial.  Given $(\mathcal P,I)\in \mathcal Q_{\mu,c}(n)$, let \[s(\mathcal P,I)=\sum_{p_i\in I}\widehat d((p_i)_-,(p_i)_+),\] and let \[s_{\mu,c}(n)=\sup_{(\mathcal P,I)\in \mathcal Q_{\mu,c}(n)}s(\mathcal P,I).\]

\begin{lemma}\label{lem:DGO4.14} Let $H$ be a subgroup of $G$ and $X$ a subset of $G$ such that $\Cay(G,X\sqcup H)$ is hyperbolic.  Then for any $\mu\geq 1$ and $c\geq 0$, there exists a constant $D=D(\mu,c)>0$ such that $s_{\mu,c}(n)\leq Dn$ for any $n\in \mathbb N$. 
\end{lemma}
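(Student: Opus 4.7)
The plan is to present this as a direct specialization of \cite[Proposition 4.14]{DGO17}. In the full DGO setting, one works with a group $G$ equipped with a family $\{H_\lambda\}_{\lambda\in\Lambda}$ of subgroups, each with its own auxiliary metric $\widehat{d}_\lambda$, and the proposition bounds the total sum of $\widehat{d}_\lambda$-distances taken over isolated $H_\lambda$-components occurring as distinguished sides of an $n$-gon in $\Cay(G,X\sqcup\bigsqcup_\lambda H_\lambda)$. Taking $|\Lambda|=1$ with $H_1=H$, and noting that the proof of the polygon bound in DGO uses only the hyperbolicity of the augmented Cayley graph (the additional local-finiteness hypothesis present in the definition of a hyperbolically embedded subgroup is invoked elsewhere in \cite{DGO17}, not in the derivation of the linear estimate), immediately yields the present statement.

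If one wished to give a self-contained argument, the strategy would be to fix $\delta$ so that every geodesic polygon in $\Cay(G,X\sqcup H)$ is $\delta$-thin, and for each isolated $H$-component $p_i\in I$ of $\mathcal{P}=p_1\cdots p_n$ to produce an admissible path from $(p_i)_-$ to $(p_i)_+$ out of the complementary sides $p_{i+1},\ldots,p_{i-1}$. The isolation assumption guarantees that no $H$-edge appearing in this detour is connected to $p_i$; after replacing any such non-connecting $H$-edges by geodesic segments in $\Cay(G,X)$ one obtains a path that avoids $\Cay(H,H)$, which is exactly what the definition of $\widehat{d}$ demands. Hyperbolicity together with the $(\mu,c)$-quasi-geodesic assumption on each $p_i$ then controls the length of this substituted detour in terms of the combinatorial size of the complementary boundary.

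The linear bound $s_{\mu,c}(n)\leq Dn$ should then fall out of a double-counting argument: a single edge of $\mathcal{P}$ can serve in the admissible path estimating $\widehat{d}((p_i)_-,(p_i)_+)$ for only a uniformly bounded number of indices $i\in I$, with the bound depending only on $\delta$, $\mu$, and $c$. Summing over $i\in I$ then gives a total of at most $Dn$.

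The main obstacle to carrying this out cleanly is managing the interaction among several isolated components that may themselves be linked to one another by $H$-edges appearing inside each other's detours; tracking this is the technical heart of the DGO argument and is handled there by induction on $n$ together with careful projection estimates onto quasi-geodesic $H_\lambda$-components. For this reason I would not attempt to reprove the estimate from scratch but rather state the lemma as a direct consequence of \cite[Proposition 4.14]{DGO17} specialized to a single subgroup.
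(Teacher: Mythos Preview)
Your proposal is correct and matches the paper's treatment exactly: the paper does not prove this lemma but simply records it as a simplified (single-subgroup) version of \cite[Proposition~4.14]{DGO17}, which is precisely what you propose. Your observation that only hyperbolicity of $\Cay(G,X\sqcup H)$ is needed---and not the local-finiteness part of the hyperbolically embedded definition---is also correct and is reflected in the paper's hypotheses.
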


Notice that the hypotheses of Lemma \ref{lem:DGO4.14} are satisfied if $H\hookrightarrow_h(G,X)$.

\subsection{Detecting non-generalized loxodromic elements}\label{sec:notloxodromics}

In this section, we give a sufficient condition under which an element is not generalized loxodromic. Recall that given a family of cyclic graphs $\set{C_n}$ with directed edges labelled by elements of $S$, and an element $w\in F(S)$ we say $\set{C_n}$ is $w$--saturated if
\[
 \lim_{m\to\infty}\limsup_{\abs{C_n}_S\to\infty} \left(\sat_{w^m}(C_n)\right)>0,
\]
where $\sat_{v}(C_n)= l_v(n)/\abs{C_n}$ and $l_v(n)$ is the maximal number of edges in $C_n$ which lie on (not necessarily disjoint) subpaths of $C_n$ whose labels are powers of cyclically reduced conjugates of $v$ (see Definition \ref{def:sparsesat}).
%

\begin{theorem}\label{thm:notgenlox} Let $G$ be a group generated by a 
symmetric set $S$, and suppose there are uniformly $(\mu',c')$-quasi-isometric simplicial maps $\psi_n:C_n\to \Cay(G,S)$.  
Denote the (not necessarily embedded) cycle $\psi(C_n)$ in $\Cay(G,S)$ by $D_n$, and denote the label of $D_n$ (as a word in $F(S)$) by $r_n$. If $\Gamma=\bigsqcup_n D_n$ is $\{w\}$--saturated for some $w\in F(S)$, then $w$ is not a generalized loxodromic element of $G$.
\end{theorem}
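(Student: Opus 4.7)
The plan is to suppose that $w$ is generalized loxodromic and derive a contradiction via Lemma \ref{lem:DGO4.14} applied to a carefully constructed $2$-gon in $\Cay(G,X\sqcup H)$ whose only $H$-edge is ``manufactured'' from a long saturated subpath of some $D_n$.

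First I would assume for contradiction that $w$ is a generalized loxodromic element of $G$. By Osin's characterization \cite[Theorem 1.4]{Os16}, there exists $X\subseteq G$ such that $\Cay(G,X)$ is hyperbolic, $G\curvearrowright\Cay(G,X)$ is acylindrical, and $w$ acts loxodromically. Set $H:=E(w)$; Lemma \ref{Ew} gives $H\hookrightarrow_h (G,X)$. Standard manipulations of the relative generating set (see \cite[\S 4]{DGO17}) let us additionally assume $S\subseteq X$ and $X\cap H=\emptyset$. Combining Lemma \ref{finiteset} with the fact that $H$ is virtually cyclic and $w$ has infinite order in $H$ produces a constant $c_0>0$ with $\widehat d(1,w^m)\ge c_0 m$ for all $m\ge 1$. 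Let $D=D(1,0)$ be the constant produced by Lemma \ref{lem:DGO4.14}, so that every $2$-gon in $\Cay(G,X\sqcup H)$ with an isolated single-edge $H$-component $\eta$ satisfies $\widehat d(\eta_-,\eta_+)\le 2D$.

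Next I would choose $m>2D/c_0$. By the $\{w\}$-saturation of $\Gamma$, there exists $\alpha>0$ and arbitrarily large $n$ with $\sat_{w^m}(D_n)\ge\alpha$, so $D_n$ contains a subpath $L$ of length $m\abs{w}_S$ whose label is some cyclic conjugate $w_i^m=u^{-1}w^m u$ of $w^m$, where $u$ is a prefix of $w$ of length $<\abs{w}_S$. Let the endpoints of $L$ be $x$ and $y=xu^{-1}w^m u$. Build the desired $2$-gon $\mathcal P_n$ in $\Cay(G,X\sqcup H)$ with one side the single $H$-edge $\eta$ labeled by $w^m$ running from $xu^{-1}$ to $xu^{-1}w^m$, and the opposite side the concatenation of the $u$-path from $xu^{-1}w^m$ to $y$, the complementary walk from $y$ back to $x$ along $D_n\setminus L$, and the $u^{-1}$-path from $x$ to $xu^{-1}$. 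Since $S\subseteq X$ and $X\cap H=\emptyset$, every edge on this second side is an $X$-edge, so $\eta$ is the unique $H$-component of $\mathcal P_n$ and is vacuously isolated. Lemma \ref{lem:DGO4.14} then gives $\widehat d(\eta_-,\eta_+)\le 2D$, while by construction $\widehat d(\eta_-,\eta_+)=\widehat d(1,w^m)\ge c_0 m>2D$, the desired contradiction.

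The main obstacle I expect to handle is the careful treatment of cyclic conjugates: the label of $L$ is the conjugate $w_i^m=u^{-1}w^m u$, which need not lie in $H$ itself, so the short $S$-detour along $u^{\pm 1}$ is essential to manufacture a genuine $H$-labeled edge whose endpoints lie in a common $H$-coset. A secondary technical point is arranging $S\subseteq X$ and $X\cap H=\emptyset$ while preserving $H\hookrightarrow_h(G,X)$; this is routine via the standard flexibility results of \cite[\S 4]{DGO17} but deserves explicit mention. Notably, the hypothesis that $\psi_n$ is a uniform quasi-isometric embedding is not used directly in this approach — only that $r_n=_G 1$ and that saturation is genuine — so any strengthening of the theorem along these lines would have to exploit the quasi-isometry hypothesis in an essential way.
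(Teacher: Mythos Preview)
Your argument has a genuine gap in the application of Lemma \ref{lem:DGO4.14}. The original statement in \cite[Proposition 4.14]{DGO17} requires \emph{every} side of the $n$-gon to be a $(\mu,c)$-quasi-geodesic in $\Cay(G,X\sqcup H)$, not only the sides in $I$; the paper's restatement is slightly abbreviated, but note that in the paper's own proof the non-$I$ sides are taken to be geodesics. Without this hypothesis the lemma is simply false: in any group with $\langle w\rangle\hookrightarrow_h(G,X)$ and $w\in S\subseteq X$, the bigon consisting of the single $H$-edge from $1$ to $w^m$ and the $X$-path $w\cdot w\cdots w$ of length $m$ would have an isolated $H$-component $\eta$ with $\widehat d(\eta_-,\eta_+)=\widehat d(1,w^m)\to\infty$, contradicting $s_{1,0}(2)\le 2D$. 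In your bigon the second side is the complementary arc of $D_n$ together with two short $u$-paths; its endpoints $xu^{-1}$ and $xu^{-1}w^m$ are at distance $1$ in $\Cay(G,X\sqcup H)$ while its length is comparable to $\abs{D_n}$, so it is not a $(\mu,c)$-quasi-geodesic for any fixed constants, and the lemma does not apply. Replacing this side by a geodesic does not help either, since any geodesic between those two points is itself a single $H$-edge connected to $\eta$.

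This is precisely why the paper's argument is more elaborate. Rather than a single $2$-gon, the paper uses \emph{all} of the maximal $w^{\ge l}$-labelled subpaths of $D_N$ simultaneously, replacing each by an $H$-edge and joining consecutive ones by genuine geodesics in $\Cay(G,X\sqcup E(w))$, yielding a $2k(l)$-gon whose non-$I$ sides are legitimate quasi-geodesics. The contradiction then comes from a balance: saturation forces the total $\widehat d$-length of the isolated components to be $\succeq l_N$, while each component has $S$-length $\succeq l$, so the number of sides $k(l)\preceq l_N/l$ tends to $0$ relative to $l_N$ as $l\to\infty$, violating $s\le D\cdot 2k(l)$. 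The uniform quasi-isometric embedding of the $D_n$ is essential here, exactly to pass from $S$-lengths of the subpaths $L_i$ to $S$-distances between their endpoints (and thence to $\widehat d$-distances via Lemma \ref{finiteset}). Your own observation that your approach never uses this hypothesis is the tell: as written, your argument would prove the theorem for \emph{arbitrary} loops $r_n=_G1$, which is far too strong.
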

\begin{proof}
Without loss of generality, we may assume that $w$ is cyclically reduced. Suppose for a contradiction that $w$ is a generalized loxodromic element and choose $X\subset G$ such that $\Cay(G,X)$ is hyperbolic, $G\curvearrowright \Cay(G,X)$ is acylindrical and $w$ is loxodromic with respect to this action. Let $E(w)$ be the maximal virtually cyclic subgroup containing $w$. Choose a constant $M$ so that $E(w)$ is contained in the $M$--neighbourhood of $\fgen{w}$ in $\Cay(G,X)$. By Lemma \ref{Ew}, $E(w)\hookrightarrow_h(G,X)$.  Let $Y\subset E(w)$ be the finite subset provided by Lemma \ref{finiteset}. It follows immediately that there is a constant $\kappa>0$ such that $d_Y(a,b)\geq \kappa d_S(a,b)$ holds for all $a,b\in E(w)$.


For each sufficiently large $l$, we choose some $N=N(l)$ such that $\sat_{w^l}(D_N)\geq\varepsilon>0$. We will now replace $D_N$ by a $2k(l)$--gon, where the images of odd numbered sides are isolated $E(w)$--components, while even numbered sides are geodesics in $\Cay(G,X\sqcup E(w))$ connecting end points of consecutive odd numbered sides. We will then show that these polygons cannot satisfy Lemma \ref{lem:DGO4.14}, contradicting the assumption that $w$ is a generalized loxodromic element.

To do this, consider a maximal collection $\mathcal P_N$ of disjoint subpaths  of $C_N$ such that for each $P\in\mathcal P_N$, $\psi_N(P)$ is labeled by $w^k$ for some $k\geq l$. Extend each $\psi_N(P)$ to a maximal length subpath $L$ of $D_N$ which is contained in the same coset of $E(w)$ as the end vertices of $\psi_N(P)$. Notice that for all $N$ large enough, $D_N$ cannot be contained in a single $E(w)$ coset and that  multiple $P$'s may define the same $L$.  Let $L_1,\ldots L_k$ be the collection of all such paths $L$, and for each $L_i$ let $a_i,b_i$ be the end vertices of the path.

The cycles $D_N$ are uniformly quasi-isometrically embedded in $\Cay(G,S)$, so
\[
 \sum_{i=1}^k l_S(L_i) \asymp \sum_{i=1}^k d_S(a_i,b_i) \preceq \sum_{i=1}^k d_Y(1,a_i^{-1}b_i) \asymp \sum_{i=1}^k \widehat d(a_i,b_i)
\]
The Cayley graph $\Cay(G,X)$ is a subgraph of $\Cay(G,X\sqcup E(w))$, so we can consider the $L_i$ as paths in $\Cay(G,X\sqcup E(w))$.  In $\Cay(G,X\sqcup E(w))$ each $L_i\in \mathcal L_N$ can be replaced by an edge $e_i$, and then the $e_i$ can be connected by geodesics $\gamma_i$ to form a $2k(l)$--gon $\mathcal P=(e_1,\gamma_1,\ldots, e_{k(l)},\gamma_{k(l)})$, where the $e_i$ are all isolated $E(w)$--components. Setting $I=\setcon{e_i}{1\leq i\leq k(l)}$ we see that
\[
s(\mathcal P,I) =\sum_{i=1}^k \widehat d(a_i,b_i) \succeq \sum_{i=1}^k l_S(L_i) \succeq \sum_{P\in\mathcal P_N} \abs{P} \succeq l_N.
\]
The last step uses the fact that the $D_N$ are $w$--saturated. Thus $k(l)/l_N$ is bounded away from 0 by Lemma \ref{lem:DGO4.14}.

On the other hand, each isolated component has length which grows at least linearly with $l$, and so there are $k(l)\preceq l_N/l$ such components.  Therefore, $k(l)/l_N\to 0$ as $l\to\infty$,  which is a contradiction.
\end{proof}

From this, we recover a theorem of Sisto \cite[Theorem 1.1]{Sisto_Morse}.

\begin{corollary}\label{cor:genloxMorse} Let $G$ be a group generated by a finite set $S$, and suppose there is some $w\in G$ of infinite order such that the orbit of $w$ is not Morse in $X=\Cay(G,S)$. Then $w$ is not a generalized loxodromic element of $G$.
\end{corollary}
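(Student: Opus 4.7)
The plan is to reduce to Theorem \ref{thm:notgenlox} by producing a family of uniformly quasi-isometrically embedded cycles in $\Cay(G,S)$ whose labels are $\{\tilde w\}$-saturated, where $\tilde w\in F(S)$ is any cyclically reduced word representing $w$.

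I would first observe that generalized loxodromic elements have undistorted cyclic subgroups: if $G\curvearrowright X$ is an acylindrical action on a hyperbolic space with $w$ loxodromic and basepoint $x_0$, then the orbit map $G\to X$ is Lipschitz while $n\mapsto w^n x_0$ is a quasi-isometric embedding, which forces $|w^n|_S\succeq |n|$. Hence we may assume $\langle w\rangle$ is undistorted in $\Cay(G,S)$, so that it is itself a quasi-geodesic.

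Since $\langle w\rangle$ fails to be Morse, there are constants $K,C$ and a sequence of $(K,C)$-quasi-geodesics $\gamma_n$ with endpoints on $\langle w\rangle$ leaving the $n$-neighborhood of $\langle w\rangle$. Using a standard sub-arc construction I would extract a $(K',C')$-quasi-geodesic $\alpha_n$ from $p_n=w^{a_n}$ to $q_n=w^{b_n}$ on $\langle w\rangle$ that remains at distance at least $n/2$ from $\langle w\rangle$ except near its endpoints, with $|b_n-a_n|\to\infty$. Close up $\alpha_n$ by concatenating with the $\langle w\rangle$-arc labeled $\tilde w^{a_n-b_n}$ to form a cycle $D_n$. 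Because $\alpha_n$ and the $\langle w\rangle$-arc are quasi-geodesics of comparable lengths that are uniformly separated away from their common endpoints, the induced simplicial map from the combinatorial circle of length $|D_n|$ into $\Cay(G,S)$ is a uniform quasi-isometric embedding, as required by Theorem \ref{thm:notgenlox}.

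Finally, $|\alpha_n|\leq K\ell|b_n-a_n|+C$ where $\ell=|\tilde w|_S$, so the $\langle w\rangle$-arc occupies a definite fraction of $|D_n|$ that does not tend to $0$. For every $m\geq 1$ and every $n$ with $|b_n-a_n|\geq m$, this arc is covered by consecutive copies of $\tilde w^m$ up to an additive error of $\ell m$ edges; consequently $\limsup_n\sat_{\tilde w^m}(D_n)$ is bounded below by a positive constant depending only on $K$, independent of $m$. Thus $\{D_n\}$ is $\{\tilde w\}$-saturated, and Theorem \ref{thm:notgenlox} gives that $w$ is not a generalized loxodromic element of $G$.

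I expect the main obstacle to be the extraction of a quasi-geodesic $\alpha_n$ that stays uniformly far from $\langle w\rangle$ throughout its interior: a bigon whose two sides come close in the middle fails to be a quasi-isometrically embedded cycle, so naively using $\gamma_n$ itself does not suffice. The extraction is a careful geometric argument that trims $\gamma_n$ to a maximal excursion away from $\langle w\rangle$ at a suitable threshold and appends short geodesic caps to return to $\langle w\rangle$, while maintaining quasi-geodesic constants independent of $n$.
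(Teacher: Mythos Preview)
Your geometric setup — extracting from the non-Morse quasi-geodesics a quadrangle with one side on the $\langle w\rangle$-axis — is exactly what the paper does. The divergence is in how you plan to finish: you want to invoke Theorem~\ref{thm:notgenlox} as a black box, which requires the cycles $D_n$ to be \emph{uniformly quasi-isometrically embedded}. The paper does not do this; it explicitly writes that ``it need not be the case that the $D_n$ are uniformly quasi-isometrically embedded in $\Cay(G,S)$'' and instead reruns the DGO polygon argument (Lemma~\ref{lem:DGO4.14}) directly. The point is that in this situation there is a \emph{single} $E(w)$-component $L$ (the $\langle w\rangle$-arc, possibly extended slightly), and one only needs $\widehat d(a,b)\succeq d_S(a,b)\succeq |D_n|$ for its endpoints; this follows from undistortion of $\langle w\rangle$ and the length estimates, with no global control on the cycle required.

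Your assertion that ``quasi-geodesics of comparable lengths that are uniformly separated away from their common endpoints'' yields a uniformly QI-embedded circle is the gap. You flag the excursion of $\alpha_n$ away from $\langle w\rangle$ as the obstacle, but there are others you do not address: the geodesic caps joining the trimmed sub-arc back to $\langle w\rangle$ have length comparable to the height $h$, and in an arbitrary Cayley graph nothing prevents the two caps from passing close to one another (even though their four endpoints are pairwise $\asymp h$ apart); likewise, nothing forces $d(w^{a_n},w^{b_n})\asymp h$ unless you choose the trimming condition carefully (the paper's choice $2K\,d(\cdot,P_n)\le d(\cdot,x)$ is exactly what pins all four side-lengths to $\asymp l$). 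Even with the paper's trimming, the caps issue remains, which is why the paper abandons QI-embedding and shortcuts to the DGO inequality. Your route may be salvageable with substantially more work, but as written it rests on an unproved and possibly false claim; the paper's bypass is both shorter and safer.
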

\begin{proof} Suppose towards a contradiction that $w$ is a generalized loxodromic element of $G$.   Without loss of generality we may assume $w$ is cyclically reduced as an element of $F(S)$. Since $\fgen{w}$ is not Morse, there exist constants $K\geq 1$ and $C\geq 0$ and continuous $(K,C)$-quasi-geodesics $\alpha_n$ from $1$ to $w^{m(n)}$ which are not contained in the $n$-neighbourhood of $\fgen{w}$.

Fix any geodesic $[1,w]$ in $X$ and consider the bi-infinite path $P$ obtained by concatenating the translates of $[1,w]$ by $\fgen{w}$. Let $k=l_S(w)$ and assume $n>3k$. Increasing $K$ and $C$ if necessary, we ensure that $P$ is a continuous $(K,C)$-quasi-geodesic. Let $x\in\alpha_n$ be a vertex at maximal possible distance from the restriction of $P$ to a path $P_n$ from $1$ to $w^{m(n)}$. It follows that $l=d_S(x,P_n)\geq n-k$. Following $\alpha_n$ in each direction from $x$, let $y$ and $y'$ be the first vertices satisfying \begin{align}\label{eqn:yPn} 2Kd_S(\cdot,P_n)\leq d_S(\cdot,x).\end{align} Choose any geodesics $\gamma$ and $\gamma'$ from $y$ and $y'$, respectively, to any closest points $z$ and $z'$ on $P_n$. Denote the quasi-geodesic quadrangle with sides $\gamma,\gamma'$ and the restrictions of $P_n$ and $\alpha_n$ to paths from $z$ to $z'$ and $y$ to $y'$, respectively, by $D_n$. (See Figure \ref{fig:qgeo1}).

\begin{figure}
\resizebox{3.5in}{!}{
  \centering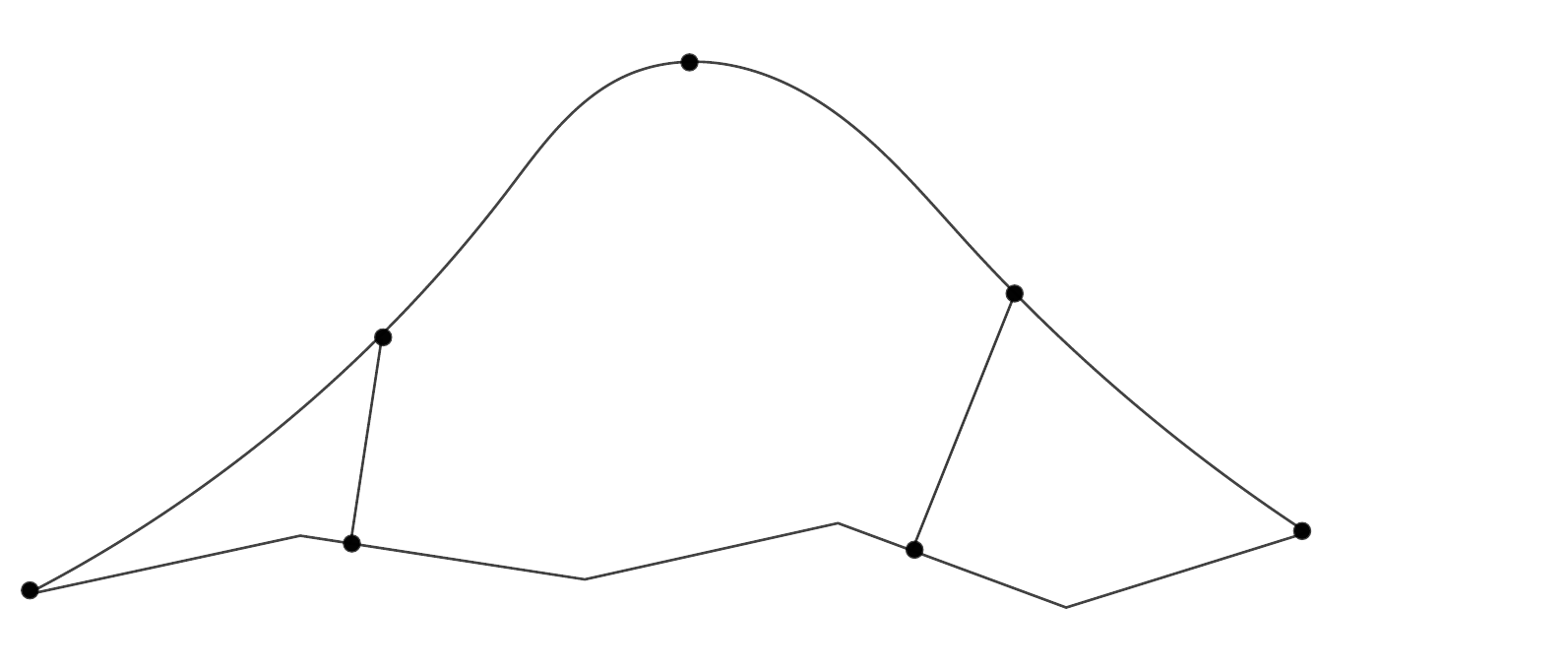} \\
	\caption{} 
	\label{fig:qgeo1}
\end{figure}

We will now argue, using the proof of Theorem \ref{thm:notgenlox} as our model, that the cycles $D_n$ provide an obstruction to $w$ being a generalized loxodromic element.

Firstly, we argue that $\set{D_n}$ is $w$--saturated.  For this it suffices to show that $\abs{P\cap D_n}=d_S(z,z')$ is bounded from below by some positive multiple of $\abs{D_n}$. Notice that since $y,y'$ lie on the continuous $(K,C)$-quasi-geodesic $\alpha_n$ which we may assume is parametrized by arc length, we have
\begin{align} \label{eqn:yy'}
 d_S(y,y') \geq \frac{1}{K} \left(d_S(y,x)+d_S(x,y')\right) - C.
\end{align}
Therefore, by the triangle inequality, (\ref{eqn:yPn}), and (\ref{eqn:yy'}), we have
\[
 \begin{array}{rl} d_S(z,z') 
 \geq & d_S(y,y') - d_S(y,P_n) - d_S(y',P_n)  \medskip \\
 \geq & \frac{1}{2K}\left(d_S(y,x)+d_S(x,y')\right) - C,
 \end{array}
\]
so it suffices to show that $d_S(x,y)$ is uniformly proportional to $\abs{D_n}$. By (\ref{eqn:yPn}) and the fact that $d_S(x,y)+d_S(y,P_n)\geq l$, we have
\[
 d_S(x,y) \geq 2Kd_S(y,P_n) \geq 2Kl - 2Kd_S(x,y)
\]
and thus $d_S(x,y) \geq \frac{2Kl}{2K+1}$. 

Next we bound $\abs{D_n}$. Since $y\in\alpha_n$, $d_S(y,P_n)\leq l$, so $d_S(x,y)\leq 2Kl$. Therefore the subpath of $\alpha_n$ from $y$ to $y'$ has length at most $4K^2L + C$. Moreover, $d_S(z,z')\leq 4Kl+2l$, so the length of the subpath of $P$ connecting $z$ to $z'$ has length at most $4K^2l + 2Kl + C$. Thus
\[
\abs{D_n} \leq 8K^2l+2Kl+2l+2C,
\]
completing the argument (for large enough $n$).

Now it need not be the case that the $D_n$ are uniformly quasi-isometrically embedded in $\Cay(G,S)$, but even so the technique employed in the proof of Theorem \ref{thm:notgenlox} can be made to work. As in that proof, consider a maximal subpath $L$ of the cycle $D_n$ containing $D_n\cap P_n$ whose end-vertices are contained in $E(w)$. By construction (for sufficiently large $n$) this path is contained in $D_n\cap P_n$ plus two subpaths of $[y,z]$ and $[y',z']$ of uniformly bounded length. Let $\mathcal P$ be the quadrangle consisting of sides $L$, the subpaths of $[y,z]$ and $[y',z']$ not already included in $L$ and the restriction of $\alpha_n$ to a path from $y$ to $y'$. Let $a,b$ be the end vertices of $L$. Arguing as before we see that
\[
s(\mathcal P,I) = \widehat d(a,b) \succeq d_S(a,b) \succeq \abs{D_n\cap P_n} \succeq \abs{D_n}.
\]
This contradicts Lemma \ref{lem:DGO4.14}.
\end{proof}

We immediately deduce Corollary \ref{cor:notMorsenotsat} from this.

Let us finish this section by giving a simple example of a group with an infinite order element which is Morse but cannot be generalized loxodromic by Theorem \ref{thm:notgenlox}. 

\begin{example}  Let $R$ be the set of all cyclically reduced conjugates of the following words (indexed by natural numbers $n\geq 12$), and their inverses, in $F(a,b)$:
\[
 r_n = ab^{n^2+1}ab^{n^2+2}\ldots ab^{(n+1)^2}
\]
and define $G=\fpres{S}{R}$. Firstly, $R$ satisfies the $C'(\frac16)$ condition, since any piece in $r_n$ is a subword of $b^{(n+1)^2-1}ab^{(n+1)^2}$ or its inverse and so has length at most $2(n+1)^2$, while $\abs{r_n}\geq (n+1)^3$.  Notice that $2(n+1)^2<\frac16 (n+1)^3$ because we assumed $n\geq 12$. The corresponding collection of cycles $C_n$ embed isometrically into $X=\Cay(G,S)$, and $\Gamma=\bigsqcup_n C_n$ is $\set{b}$--saturated, so $b$ is not generalized loxodromic by Theorem \ref{thm:notgenlox}. However, the map $\Z\to X$ defined by $n\mapsto b^n$ is an isometric embedding and the geodesic with vertex set $\fgen{b}$ is $\rho$--contracting for $\rho(r)\asymp r^{\frac23}$ (see Theorem \ref{ACGHcontracting}).
\end{example}

\subsection{A construction of non-examples}
Here we will give a versatile construction which allows us to prove that the seemingly arbitrary hypotheses of Theorem \ref{thm:unsaturatedglox} are sharp. This construction can be used to build groups with elements that are contracting but not generalized loxodromic, as well as torsion-free groups which  are not universally acylindrical. 

\begin{example} \label{ex:construction} In each case we begin with a collection of distinct words $\setcon{g_n}{n\in\N}\subseteq F(a,b)\setminus\set{1}$ and a function $f:\N\to\N$ such that $f(n)\geq 12$ for all $n$.  From this data we construct elements of $F(a,b,c,d)$ in the following way:
\begin{equation}\label{eqn:defineRn}
R_n = \prod_{i=1}^{f(n)} [c^{d^{i}}g_n(c^{-1})^{d^{i}},c^{d^{(f(n)+i)}}g_n(c^{-1})^{d^{(f(n)+i)}}]
\end{equation}
where we use the conjugation notation $x^y=yxy^{-1}$.

Note that to obtain a cyclically reduced word conjugate to $R_n$ we need only cancel words of the form $dd^{-1}$ and $d^{-1}d$. Let $C_n$ be the labeled cyclic graph corresponding to $R_n$. Set $S=\set{a,b,c,d}$ and define $R$ to be the set of cyclically reduced conjugates of the $R_n$ and their inverses. Define $G=F(a,b,c,d)/\fgen{R}$.  
  Our first goal is to prove that $R$ satisfies $C'(\frac16)$.

For each $1\leq k\leq 2f(n)$, and each $\eps\in\set{\pm 1}$ there is exactly one subpath of $C_n$ with each of the following labels:
\begin{equation}\label{eqn:obstructions}
 c d^{-k}g_n^{\eps},\quad g_n^{\eps}d^kc^{-1}.
\end{equation} 
Now suppose the label $v$ of a subpath of $C_n$ is a piece. It follows that $v$ does not contain any of the above words as strict subwords; hence it must be one of the words in (\ref{eqn:obstructions}) or its inverse, a subword of $d^{-2f(n)}g_nd^{2f(n)}$, or a (strict) subword of a word of the form $g_n^\eps w g_n^{\eps'}$ where $w\in F(c,d)$ has length at most $6f(n)+2$. Note that each word in (\ref{eqn:obstructions}) has length at most $2f(n)+1+|g_n|$ and that that $C_n$ has length at least $f(n)(3f(n)+4|g_n|)$, as each commutator in the product contributes 3 copies of $d^{f(n)}$ and 4 copies of $g_n$ to a reduced decomposition of $R_n$. It follows that $R$ satisfies $C'(\frac16)$, since
\[
 6(6f(n)+2+2|g_n|)\leq 36f(n) + 12 + 12|g_n| < f(n)(3f(n)+4|g_n|).
\]
\end{example}

For each $n$ set $S_n=\setcon{d^{i}cd^{-i}g_nd^{i}c^{-1}d^{-i}}{1\leq i\leq 2f(n)}$. Our next goal is the following.

\begin{proposition} \label{prop:surfacesubgroup} For each $n$, the subgroup of $G$ generated by $S_n$ is isomorphic to the fundamental group of a closed surface of genus $f(n)$.
\end{proposition}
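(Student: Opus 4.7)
The plan is to recognize that with $s_i := c^{d^i} g_n (c^{-1})^{d^i} = d^i c d^{-i} g_n d^i c^{-1} d^{-i}$ for $1 \leq i \leq 2f(n)$, the defining expression $(\ref{eqn:defineRn})$ is literally $R_n = \prod_{i=1}^{f(n)} [s_i, s_{f(n)+i}]$, the standard genus-$f(n)$ surface relator in the generators $S_n$. Hence the assignment $x_i \mapsto s_i$ defines a surjective homomorphism $\phi$ from the surface group $\Sigma := \fpres{x_1, \ldots, x_{2f(n)}}{\prod_{i=1}^{f(n)} [x_i, x_{f(n)+i}]}$ onto $\fgen{S_n} \leq G$, and the proof reduces to showing that $\phi$ is injective.

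First I would verify that the lifted map $\tilde\phi\colon F(x_1,\ldots,x_{2f(n)}) \to F(a,b,c,d)$ is itself injective, by examining a reduced word $x_{i_1}^{k_1}\cdots x_{i_m}^{k_m}$ with $i_l \neq i_{l+1}$ and $k_l \neq 0$: each $s_{i_l}^{k_l}$ reduces internally to $d^{i_l} c d^{-i_l} g_n^{k_l} d^{i_l} c^{-1} d^{-i_l}$, and the interface $c^{-1} d^{i_{l+1}-i_l} c$ between consecutive blocks admits no further free reduction (since $i_l \neq i_{l+1}$). The resulting word has a canonical decomposition into ``$g_n$-blocks'' $g_n^{k_l}$ framed by matching conjugators $d^{i_l} c d^{-i_l} \cdots d^{i_l} c^{-1} d^{-i_l}$, from which the exponents $(i_l, k_l)$ are uniquely recoverable. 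For the main injectivity step, suppose $w \in F(x)$ with $\phi(w) = 1$ in $G$. By Greendlinger's lemma applied to $\tilde\phi(w) \in \ngen{R}$, some cyclically reduced conjugate of $\tilde\phi(w)$ contains a subword $u$ of length at least $|R_m|/2$ that also appears in some cyclic conjugate of $R_m^{\pm 1}$. Since $|R_m|/2 > 4f(m) + 2 + |g_m|$, $u$ contains several consecutive full ``$g_m$-blocks'' $d^j c d^{-j} g_m^{\pm 1} d^j c^{-1} d^{-j}$ of $R_m$; matching these against the canonical $g_n$-block decomposition of $\tilde\phi(w)$ propagates constraints that force $g_m = g_n$ and force the conjugation exponents to agree with those in a cyclic conjugate of $R_n$'s block decomposition. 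Hence $m = n$ and $u$ is the $\phi$-image of a long subword of a cyclic conjugate of $R_n(x)^{\pm 1}$. Dehn-reducing $\tilde\phi(w)$ by replacing $u$ with the shorter complementary piece of $R_n$ lifts to multiplication of $w$ by a conjugate of $R_n(x)^{\pm 1}$ in $F(x)$, producing a new word $w'$ of the same class in $\Sigma$ whose image $\tilde\phi(w')$ has strictly shorter cyclically reduced length. Iterating terminates at the trivial word, which shows $w = 1$ in $\Sigma$.

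The principal obstacle is the block-matching step: ruling out accidental alignments of $R_m$ against $\tilde\phi(w)$ arising, for example, from $g_m$ appearing as a subword of some $g_n^{k_l}$. The safeguard is that $u$ covers several consecutive $g_m$-blocks, so even if isolated blocks overlap coincidentally, propagating the alignment across the whole sequence forces $m = n$ and forces $u$ to coincide with a canonical subword of $R_n(s)$ lifted from $R_n(x)$. This careful combinatorial bookkeeping---tracking exactly which subwords of which $R_m$ can appear in the uniquely decomposed form of $\tilde\phi(w)$---is the heart of the proof.
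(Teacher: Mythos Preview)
Your approach is the same as the paper's---apply Greendlinger, identify the relator as $R_n$, reduce inductively---but your diagnosis of where the difficulty lies is off. The obstacle you flag, that $g_m$ might sit accidentally inside some $g_n^{k_l}$, dissolves once you note that each $g_i \in F(a,b)$ while the framing letters are $c$ and $d^{\pm 1}$: the maximal $F(a,b)$-subwords of both $\tilde\phi(w)$ and of any cyclic conjugate of $R_m$ are exactly the $g$-blocks, so a block $d^{-1} g_m^{\pm 1} d$ inside the Greendlinger piece must coincide with some $d^{-1} g_n^{k_l} d$ in $\tilde\phi(w)$, and distinctness of the $g_i$ gives $m=n$ immediately. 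The paper dispatches this in one sentence.

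The step you actually underspecify is the lift of the Dehn reduction back to $F(x)$. Greendlinger only hands you a subword $u$ of a cyclic conjugate of $\tilde\phi(w)$; its endpoints need not fall on $s_i$-block boundaries, and without that alignment the conjugating element for $R_n^{\pm 1}$ lies a priori only in $F(S)$, so the shortened word need not lie in the image of $\tilde\phi$ and your induction does not close. The paper handles this by working instead with a van Kampen diagram $D_w$ whose boundary carries the \emph{unreduced} word $(\ref{eqn:Sreduced})$: the $d$-cancellations between consecutive blocks appear as spurs on $\partial D_w$, and a spur adjacent to the Greendlinger face $\Pi$ meets $\partial\Pi$ at a vertex of degree $\geq 3$. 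The boundary arc from the basepoint to that spur tip is then visibly a word in $F(S_n)$, which furnishes the conjugator and guarantees that the shortened word $w'$ again lies in $F(S_n)$. Your word-combinatorial route can be completed by arguing that the $c$-letters force the block decomposition of $u$ read in $R_n$ to align with its block decomposition read in $\tilde\phi(w)$, so that $u$ can be trimmed to a product of whole $s_i^{\pm 1}$ that is simultaneously a subword of $w$ and of a cyclic conjugate of $R_n(x)$; but that alignment argument is the real content, and as written it is missing.
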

It suffices to prove that the $F(S_n)\cap \ngen{\set{R_k}}^{F(S)} = \ngen{R_n}^{F(S_n)}$, where $\ngen{X}^G$ denotes the subgroup of $G$ normally generated by $X$. In particular, we need to show that if $w\in F(S_n)$ satisfies $w=_G 1$ then $w$ is equal in $F(S)$ to a product of conjugates of $R_n$ by elements of $F(S_n)$.

\begin{lemma}\label{lem:reducedform} Any word in $F(S_n)$ has a reduced decomposition in $F(S)$ which is obtained from a word of the following form only by cancelling words of the form $dd^{-1}$ and $d^{-1}d$:
\begin{equation}\label{eqn:Sreduced}
 w =_{F(S)} \Pi_{l=1}^m d^{x_l}cd^{-x_l} g_n^{y_l} d^{x_l}c^{-1}d^{-x_l}
\end{equation}
where each $x_l>0$, $y_l\in\Z\setminus\set{0}$ and $x_l\neq x_{l+1}$.
\end{lemma}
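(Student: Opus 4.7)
The plan is to exhibit an explicit correspondence between reduced words in $F(S_n)$ and expressions of the form $(\ref{eqn:Sreduced})$, and then check that such an expression already requires only $d^{\pm 1}$ cancellations to reach its reduced form in $F(S)$.

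First I would prove the following computational identity: for $1\leq j\leq 2f(n)$ and any $k\in\Z\setminus\set{0}$,
\[
	s_j^k \;=_{F(S)}\; d^j c d^{-j} g_n^k d^j c^{-1} d^{-j},
\]
where $s_j = d^j c d^{-j} g_n d^j c^{-1} d^{-j}$ is the generator of $S_n$ indexed by $j$. This is a straightforward induction on $\abs{k}$: between any two consecutive copies of $s_j^{\pm 1}$ one cancels the junction $d^{-j}d^j$ and then the resulting $c^{-1}c$ (or, for negative exponent, $d^j d^{-j}$ and then $c c^{-1}$), and since $g_n\in F(a,b)$ contains no letter $c^{\pm 1}$ or $d^{\pm 1}$, these junction cancellations do not interact with the central block of $g_n$'s.

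Next, given any reduced word $u = s_{i_1}^{\eps_1} s_{i_2}^{\eps_2} \cdots s_{i_k}^{\eps_k}$ in $F(S_n)$, I would group its letters into maximal runs of consecutive letters sharing the same subscript. Within each such run all exponents must agree, for otherwise some $s_i^{\eps} s_i^{-\eps}$ would appear as a subword, contradicting reducedness. Labelling the runs as $s_{j_1}^{y_1},\ldots,s_{j_m}^{y_m}$ with $j_l\neq j_{l+1}$ and $y_l\in\Z\setminus\set{0}$, and applying the identity above run-by-run, I see that $u$ equals the right-hand side of $(\ref{eqn:Sreduced})$ with $x_l=j_l$. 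The constraints $x_l>0$ (from $1\leq i_l\leq 2f(n)$), $y_l\neq 0$, and $x_l\neq x_{l+1}$ are then automatic.

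Finally, I would analyse the cancellations required to reduce the expression $(\ref{eqn:Sreduced})$ in $F(S)$. No cancellation is possible within a single factor $d^{x_l} c d^{-x_l} g_n^{y_l} d^{x_l} c^{-1} d^{-x_l}$, since the letters $c$, $g_n^{y_l}$, $c^{-1}$ separate the runs of $d$'s and contribute no $d^{\pm 1}$ of their own. At the junction between factors $l$ and $l+1$ one reads $\ldots c^{-1} d^{-x_l}\cdot d^{x_{l+1}} c\ldots$, which reduces via $d^{\pm 1}$ cancellations alone to $\ldots c^{-1} d^{x_{l+1}-x_l} c\ldots$, and because $x_l\neq x_{l+1}$ at least one surviving $d^{\pm 1}$ separates the $c^{-1}$ from the $c$, ruling out any $cc^{-1}$ cancellation. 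Hence the reduced form of $u$ in $F(S)$ is obtained from $(\ref{eqn:Sreduced})$ using only $dd^{-1}$ and $d^{-1}d$ cancellations, which is exactly the claim. The only real piece of bookkeeping is the grouping step, where the reducedness of $u$ in $F(S_n)$ is used precisely to force both $y_l\neq 0$ and $x_l\neq x_{l+1}$; the latter condition is in turn exactly what prevents the disallowed $cc^{-1}$ cancellations at the junctions.
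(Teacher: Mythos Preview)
Your proof is correct and follows the same idea as the paper's proof, which consists of the single observation that distinct generators $s_j\in S_n$ begin with distinct powers of $d$; you have simply unpacked this observation carefully. One very minor point: when you write $g_n^{y_l}$ inside a factor you should understand this as its reduced form in $F(a,b)$ (for some choices of $g_n$ the naive concatenation of $|y_l|$ copies of $g_n$ is not reduced), but since $g_n\in F(a,b)$ this reduction involves only $a,b$-letters and so does not interfere with the $c,d$-structure you analyse.
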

\begin{proof}
This obviously holds, since different elements of $S_i$ all begin with different powers of $d$.
\end{proof}

\begin{proof}[Proof of Proposition \ref{prop:surfacesubgroup}]
Let $w\in F(S_n)$ satisfy $w=_G 1$, and let $D_w$ be a minimal diagram whose boundary label read clockwise from a fixed vertex $e$ is the representation of $w$ in the free monoid $M(S\sqcup S^{-1})$ presented in (\ref{eqn:Sreduced}). Note that this diagram has vertices of degree $1$ (which we will call \textbf{spurs}).  Such a vertex is always connected to a vertex of degree at least $3$ in $D_w$ by a unique path, all of whose internal vertices have degree $2$. Necessarily the label of this path is a power of $d$; this corresponds exactly to the fact that the representation of $w$ in  (\ref{eqn:Sreduced}) can be reduced to a cyclically reduced word in $F(S)$ only by cancelling words of the form $dd^{-1}$ and $d^{-1}d$.

By Greendlinger's lemma, there is a face $\Pi$ in $D_w$ such that $\partial \Pi\cap \partial D_w$ is connected and has at least $\frac12\abs{\partial\Pi}$ edges (it may be the whole of $\partial \Pi$). Then the label of $\partial\Pi$ is equal in $F(S)$ to some $R_k$, and, in particular, some subpath of $\partial D_w$ has label $dg_k^{\eps}d^{-1}$ for $\eps\in\set{\pm 1}$. Since all of the $g_i$ are distinct, and the label of $\partial D_w$ does not contain a subpath labeled $dg_id^{-1}$ for any $i\neq n$, we deduce that $k=n$.

Moreover, there is a spur $x$ in $D_w$ which is connected by a path to a  vertex in $\partial \Pi$ of degree at least $3$. We see immediately that the label of the subpath of $\partial D_w$ from $e$ to $x$ has exactly the form given in (\ref{eqn:Sreduced}) and consequently is an element of $F(S_n)$.  It also follows that the label of $\partial\Pi$ read from $x$ is (a reduction of) a cyclic conjugate of $R_n$ by an element of $F(S_n)$.  Consequently, $w$ can be expressed as the product of a word $w'\in F(S_n)$ satisfying $w'=_G 1$ and a conjugate of $R_n^{\pm 1}$ by an element of $F(S_n)$.  Moreover, we ensure that $l_{F(S)}(w')<l_{F(S)}(w)$.  Thus by induction on $l_{F(S)}(w)$ we see that any $w\in F(S_n)$ with the property that $w=_G 1$, can be expressed as a product of conjugates of $R_n^{\pm 1}$ by words in $F(S_n)$.
\end{proof}

We illustrate the above proof in the figure below.
\begin{figure}[H]
 \centering

\begin{tikzpicture}[xscale=0.8, yscale=0.8, vertex/.style={draw,fill,circle,inner sep=0.3mm},middlearrow/.style 2 args={
        decoration={             
            markings, 
            mark=at position 0.5 with {\arrow[xshift=3.333pt]{triangle 45}, \node[#1] {#2};}
        },
        postaction={decorate}
    }]

\filldraw[fill=black!10!white]
	(3,0) -- (3,3) -- (9,3) -- (9,0) -- (3,0);

\node[vertex]
(1) at (0,0) {};
\path (1,0) node[below] {$...cd^{-x_1}g_nd^{x_1}c^{-1}...$};

\node[vertex]
(2) at (2,0) {};

\node[vertex]
(3) at (3,0) {};

\node[vertex]
(4) at (3,1) {};

\draw[] (3,1) -- (2,1);

\node[vertex]
(4b) at (2,1) {};
\path (2,1) node[left] {$x$};

\node[vertex]
(5) at (3,1) {};

\node[vertex]
(6) at (3,3) {};
\path (3,2) node[right] {$...cd^{-x_k}g_n^{\pm 1}d^{x_k}c^{-1}...$};

\node[vertex]
(6b) at (4,4) {};

\node[vertex]
(10) at (9,0) {};

\node[vertex]
(14) at (12,0) {};

\draw[middlearrow={below}{}]
	(1) -- (2);
\draw[]
	(3) -- (2);
\draw[]
	(4) -- (3);
\draw[]
	(4) -- (5);
\draw[middlearrow={below}{}]
	(5) -- (6);
\draw[]
	(10) -- (14);
\draw[] (4,4)--(4,3);

\path (6,1) node[] {$\Pi$};

\draw[dashed] (-0.8,1)-- (-0.8,0.2) -- (2.8,0.2) -- (2.8,0.8) -- (1.5,0.8)-- (1.5,1.2) -- (2.8,1.2) --(2.8,3.2) -- (3.8,3.2) -- (3.8,4.2)-- (4.2,4.2)-- (4.2,3.2) -- (9.2,3.2) -- (9.2,0.2) -- (12.2,0.2);

\path (1,0.2) node[above] {$\partial D$};

\node[vertex]
(13) at (-1,1) {};
\path (-1,1) node[above] {$e$};
\node[vertex]
(14) at (-1,0) {};

\draw 	(-1.2,0) -- (-0.8,0);
\draw[middlearrow={below}{}] (-1,1)--(-1,0);
\draw[dashed] (-0.8,0)--(0,0);

\path (-1,0.5) node[left] {$d^{x_1-x_m}$};

\end{tikzpicture}
 \caption{Intersection of relators and boundary words}
 \label{figRBP}
\end{figure}

%

We now use the above construction to prove the following result, from which Theorems \ref{thm:contnotgenlox} and \ref{thm:exnotglox} immediately follow.  Given a set $R=\{r_1,r_2,\dots\}\subset F(S)$, let $C_n=C_{r_n}$, and let $p_n=p_{r_n}$ denote the length of the longest piece in $r_n$.

\begin{corollary} For every unbounded function $\rho:\N\to\N$ there exists a set $R=\{r_1,r_2,\dots\}\subset F(a,b,c,d)$ such that $p_n\cdot\sat_{b}(C_n)\preceq\rho(n)$ and $\fgen{b}$ is $\rho'$--contracting for some $\rho'\asymp\rho$, but $b$ is not generalized loxodromic.
\end{corollary}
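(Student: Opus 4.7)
The plan is to instantiate Example~\ref{ex:construction} with $g_n := b^{h(n)}$ for a strictly increasing function $h\colon\N\to\N$ with $h(n)\geq 12$, and to set $f(n):=h(n)$. After replacing $\rho$ by a nondecreasing unbounded function $\tilde\rho\asymp\rho$ (possible since $\rho$ is unbounded), we take $h(n)\asymp\tilde\rho(n)$, and we enumerate the resulting family of relators so that $|C_n|$ grows at the right rate relative to $n$ (the calibration is discussed below). By Example~\ref{ex:construction} the presentation $G=\fpres{a,b,c,d}{R_1,R_2,\dots}$ is $C'(\tfrac{1}{6})$, hence each $C_n$ embeds isometrically into $X=\Cay(G,\{a,b,c,d\})$; in particular $n\mapsto b^n$ is an isometric embedding of $\Z$ into $X$.

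First I verify $p_n\cdot\sat_b(C_n)\preceq\rho(n)$. From the analysis in Example~\ref{ex:construction}, the longest piece in $R_n$ has length $p_n\asymp f(n)+h(n)\asymp h(n)$, while the $b$-labelled edges in $C_n$ are exactly those inside the $4f(n)$ occurrences of $g_n=b^{h(n)}$. Since $|C_n|\asymp f(n)(f(n)+h(n))\asymp h(n)^2$, we compute
\[
\sat_b(C_n)=\frac{4f(n)h(n)}{|C_n|}\asymp\frac{h(n)}{f(n)+h(n)}\asymp\tfrac{1}{2}.
\]
Multiplying, $p_n\cdot\sat_b(C_n)\asymp h(n)\preceq\rho(n)$, as required.

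To show $b$ is not generalized loxodromic, I apply Theorem~\ref{thm:notgenlox} to the family $\{C_n\}$. For any fixed $m\geq 1$ and any $n$ with $h(n)\geq m$, each $b^{h(n)}$-run of $C_n$ is covered entirely by overlapping copies of $b^m$ (by starting a $b^m$-subpath at each of the first $h(n)-m+1$ edges of the run), so $\sat_{b^m}(C_n)=\sat_b(C_n)\asymp\tfrac{1}{2}$. Hence $\limsup_n\sat_{b^m}(C_n)>0$ for every $m$, and $\{C_n\}$ is $\{b\}$-saturated. Since each $C_n$ embeds isometrically in $X$ (so the hypothesis on uniformly quasi-isometric simplicial maps is satisfied with constants $(1,0)$), Theorem~\ref{thm:notgenlox} yields that $b$ is not generalized loxodromic.

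Finally, for the contraction bound, $C'(\tfrac{1}{6})$ convexity forces each $C_n$ to meet the bi-infinite $b$-axis $\alpha=\fgen{b}$ in a single arc of length at most $h(n)$ (the longest $b$-power occurring in $R_n$). By Theorem~\ref{ACGHcontracting}, $\alpha$ is $\rho'$-contracting for the function $\rho'(r)\asymp\max\{h(n):|C_n|\leq r\}$, and our choice of $h$ and the calibration of the enumeration (so that $|C_n|$ and $n$ are aligned compatibly with the two roles of $\rho$) ensure $\rho'\asymp\rho$. The main obstacle is precisely this final calibration: since the saturated regime forces $|C_n|\asymp h(n)^2$, one must arrange the index $n$ and the length $|C_n|$ (and hence the two interpretations of $\rho(n)$ as an index bound and as a contraction function of radius) to agree up to multiplicative constants; this is achieved by a careful thinning/reindexing of the family $\{R_n\}$ once $\tilde\rho$ is fixed.
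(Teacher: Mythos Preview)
Your approach diverges from the paper's in the choice of obstruction to $b$ being generalized loxodromic, and this is where the real problem lies. You invoke Theorem~\ref{thm:notgenlox} (the saturation criterion), which forces $\sat_b(C_n)$ to be bounded below by a positive constant. But this is incompatible with the contraction claim for slowly growing~$\rho$. Concretely: with $g_n=b^{h(n)}$ and $f(n)=h(n)$, every relator $C_n$ has length $|C_n|\asymp h(n)^2$ and meets the $b$--axis in an arc of length $h(n)$. By Theorem~\ref{ACGHcontracting} the local contraction function therefore satisfies $\rho'(|C_n|)\geq h(n)\asymp\sqrt{|C_n|}$, so $\rho'(r)\succeq\sqrt{r}$ at every relator length. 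This cannot be $\preceq\rho$ when, say, $\rho(r)=\log r$. More generally, any choice with $f(n)\preceq h(n)$ (which is exactly what saturation requires) gives $|C_n|\preceq h(n)^2$ and hence the same $\sqrt{r}$ lower bound. Your proposed ``thinning/reindexing'' cannot repair this: the contraction function is determined by the pairs $(|C_n|,\text{intersection length})$, which are intrinsic to the relators and unaffected by how you index them. Reindexing only moves the target in the inequality $p_n\cdot\sat_b(C_n)\preceq\rho(n)$; it does nothing to the function $r\mapsto\rho'(r)$.

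The paper avoids this tension by using a completely different obstruction. It takes $g_n=a^n b^{\rho(n)} a^{-n}$ and $f(n)=n$, so that the $b$--saturation of $C_n$ is $\asymp\rho(n)/n\to 0$ (the relators are \emph{not} $b$--saturated), and then shows $b$ is not generalized loxodromic via Proposition~\ref{prop:surfacesubgroup}: were $b$ generalized loxodromic, $\ngen{b^N}$ would be free for all large $N$, but $\ngen{b^{\rho(k)}}$ contains the surface subgroup $\fgen{S_k}$ for suitable $k$. Decoupling the non-loxodromic argument from saturation is precisely what allows the intersection $|C_n\cap\fgen{b}|=\rho(n)$ to be small relative to $|C_n|\asymp n^2$, giving $\rho'(r)\asymp\rho(\sqrt{r})\preceq\rho(r)$.
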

\begin{proof} Without loss of generality, we may assume that $\rho(n)/n\to 0$ as $n\to\infty$.
In the construction in Example \ref{ex:construction}, set $g_n=a^nb^{\rho(n)}a^{-n}$ and $f(n)=n$. It is clear that $b$ has infinite order in the resulting group $G=F(a,b,c,d)/\langle R\rangle$, and by Theorem \ref{ACGHcontracting}, $\fgen{b}$ is $\rho'$--contracting for some $\rho'\asymp \rho$.

The longest piece in $C_n$ has length $\asymp n$.  To find an upper bound on the saturation notice that $C_n$ contains exactly $4n$ copies of $b^{\rho(n)}$ and that $\abs{C_n}\asymp n^2$. Therefore,
\[
p_n\cdot\sat_{b}(C_n)\preceq n\cdot\frac{4n\rho(n)}{n^2} \preceq \rho(n).
\]

Suppose for a contradiction that $b$ is a generalized loxodromic element.  Then there exists a constant $N$ such that for every $n\geq N$, $\ngen{b^n}$ is a free group.  Choose $k\in\mathbb N$ so that $\rho(k)\geq N$. By construction, $\ngen{b^{\rho(k)}}$ contains the subgroup of $G$ generated by $S_k$.  However, this subgroup is not free by Proposition \ref{prop:surfacesubgroup}, yielding the contradiction.
\end{proof}

We are now ready to prove Theorem \ref{thm:tfnonUA}.

\begin{proof}[Proof of Theorem \ref{thm:tfnonUA}] Let $w^*=(w_k)_{k\in\N}=abbabaab...$ be the Thue-Morse sequence, that is, the infinite word generated by iterating the procedures $a\mapsto ab$ and $b\mapsto ba$ on the word $a$. This sequence is famously triple-free.
For each $n$ let
\[
 g_n = (b^3w_{2^n}w_{2^n+1}\ldots w_{2^{n+1}-1})^n
\] in the construction in Example \ref{ex:construction}, and let $G=F(a,b,c,d)/\langle R\rangle$ be the resulting group.

We will first show that every non-trivial element of $F(a,b)$ is generalized loxodromic in $G$.  Suppose for a contradiction that $v$ is a cyclically reduced word in $F(a,b)$ such that arbitrary powers of $v$ appear as labels of subpaths of the cycles $C_n$.  By construction of $R$, $v$ must be a subword of $g_n$ for some $n$.  Choose a cyclic conjugate of $v$ of the form $b^kv'$ with $\abs{k}$ maximal. If $|k|\geq 3$,  
then for all $n$ sufficiently large, no (non-zero) power of $v$ is a subword of $g_n$, which is a contradiction.  If $|k|<3$, 
then for every $j>0$ there is some $n>0$ such that $v^j$ is a subword of $b^2w_{2^n+1}\ldots w_{2^{n+1}-1}b^2$.  However, the Thue-Morse sequence is triple-free, so this word contains no non-trivial seventh powers, and we again reach a contradiction.  Thus $v$ is $D$--contracting for some constant $D$, and so by \cite[Theorem 6.2]{ACGH2}, $v$ is a generalized loxodromic element.  Therefore, every non-trivial element of $F(a,b)$ is generalized loxodromic in $G$. 

We next show that $G$ is not universally acylindrical.  Suppose for a contradiction that $G$ admits a universal acylindrical action. Then by \cite[Theorem 5.3 and Proposition 6.34(b)]{DGO17} there exists a uniform constant $m$ such that for every generalized loxodromic element $h\in G$, $\ngen{h^m}$ is a free subgroup of $G$.  Since every non-trivial element of $F(a,b)$ is generalized loxodromic, it follows that for every $n\geq m$, $\ngen{g_n}$ is a free subgroup of $G$ .   However, by construction, $\ngen{g_n}$ contains the subgroup of $G$ generated by $S_n$ which by Proposition \ref{prop:surfacesubgroup} is not free, yielding the contradiction.

The same proof will work for any infinite subset of $\set{R_n}$. Using Bowditch's taut loop spectrum \cite{BowditchUncQI} as an invariant we obtain non-quasi-isometric groups which are not universally acylindrical.
\end{proof}

\begin{remark}
By adjusting the above construction, it is possible to give an example of a finitely generated group in which every non-trivial element is generalized loxodromic but which is not universally acylindrical.  For example, one could replace the powers of $d$ in (\ref{eqn:defineRn}) by a collection of words in $F(c,d)$ which are uniformly power-free.
\end{remark}

\bibliographystyle{alpha}
\bibliography{DB}

\end{document}